\newcommand{\M}{\mathcal{M}}
\newcommand{\B}{\mathcal{B}}
\newcommand{\hn}{h_{\mathsf{NAF}}}
\newcommand{\hg}{h_{\mathsf{GRAY}}}
\newcommand{\ro}{r_{\mathsf{OPT}}}
\DeclareMathOperator{\rank}{rank}
\newcommand{\bff}{\boldsymbol{f}}
\newcommand{\bfu}{\boldsymbol{u}}
\newcommand{\bfv}{\boldsymbol{v}}
\newcommand{\bfx}{\boldsymbol{x}}
\newcommand{\bfy}{\boldsymbol{y}}
\newif\ifdetails
\newcommand{\DETAIL}[1]%
{\ifdetails\par\fbox{\begin{minipage}{0.9\linewidth}\textit{Detail:}
      #1\end{minipage}}\par\fi}
\newcommand{\TODO}[1]%
{\ifdetails\par\fbox{\begin{minipage}{0.9\linewidth}\textbf{TODO:}
      #1\end{minipage}}\par\fi}
\newtheorem{lemma}{Lemma}
\newtheorem{prop}[lemma]{Proposition}
\newtheorem{theorem}[lemma]{Theorem}
\newtheorem{cor}[lemma]{Corollary}
\theoremstyle{remark}
\newtheorem{example}{Example}
\newtheorem*{remark}{Remark}
\newtheorem*{defi}{Definition}
\title{$q$-Quasiadditive Functions}
\author[Sara Kropf \and Stephan Wagner]{Sara Kropf\addressmark{1,2}\thanks{The first author is supported by the Austrian Science Fund (FWF): P~24644-N26.}\ \addressmark{\S} \and Stephan Wagner\addressmark{3}\thanks{The second author is supported by the National Research Foundation of South Africa under grant number 96236.}
\thanks{The authors were also supported by the Karl Popper Kolleg
``Modeling--Simulation--Optimization'' funded by the Alpen-Adria-Universit\"at
Klagenfurt and by the Carinthian Economic Promotion Fund (KWF). Part of this paper was written while the second author was a Karl Popper Fellow at the Mathematics Institute in Klagenfurt. He would like to thank the institute for the hospitality received.}}
\address{\addressmark{1}Institut f\"ur Mathematik, Alpen-Adria-Universit\"at
  Klagenfurt, Austria, sara.kropf@aau.at\\
\addressmark{2}Institute of Statistical Science, Academia
  Sinica, Taipei, Taiwan, sarakropf@stat.sinica.edu.tw\\
\addressmark{3}Department of Mathematical Sciences, Stellenbosch University, South
  Africa, swagner@sun.ac.za}
\keywords{$q$-additive function, $q$-quasiadditive function, $q$-regular function, central limit theorem}
\begin{document}
\maketitle

\begin{abstract}
In this paper, we introduce the notion of \begin{math}q\end{math}-quasiadditivity of
arithmetic functions, as well as the related concept of
\begin{math}q\end{math}-quasimultiplicativity, which generalises strong \begin{math}q\end{math}-additivity
and -multiplicativity, respectively. We show that there are many
natural examples for these concepts, which are characterised by
functional equations of the form \begin{math}f(q^{k+r}a + b) = f(a) + f(b)\end{math} or
\begin{math}f(q^{k+r}a + b) = f(a) f(b)\end{math} for all \begin{math}b < q^k\end{math} and a fixed parameter \begin{math}r\end{math}.
In addition to some elementary properties of \begin{math}q\end{math}-quasiadditive and \begin{math}q\end{math}-quasimultiplicative functions, we prove characterisations of \begin{math}q\end{math}-quasiadditivity and \begin{math}q\end{math}-quasimultiplicativity for the special class of \begin{math}q\end{math}-regular functions. The final main result provides a general central limit theorem that includes both classical and new examples as corollaries.
\end{abstract}

\section{Introduction}

Arithmetic functions based on the digital expansion in some base \begin{math}q\end{math}
have a long history (see, e.g., \cite{Bellman-Shapiro:1948,Gelfond:1968:sur,Delange:1972:q-add-q-mult,Delange:1975:chiffres,Cateland:digital-seq,Bassily-Katai:1995:distr,Drmota:2000})
 The notion of a \begin{math}q\end{math}-\emph{additive} function is due to \cite{Gelfond:1968:sur}: an arithmetic function (defined on nonnegative integers) is called \begin{math}q\end{math}-additive if
\begin{equation*}f(q^k a + b) = f(q^k a) + f(b)\end{equation*}
whenever \begin{math}0 \leq b < q^k\end{math}. A stronger version of this concept is \emph{strong} (or \emph{complete}) \begin{math}q\end{math}-additivity: a function \begin{math}f\end{math} is said to be strongly \begin{math}q\end{math}-additive if we even have
\begin{equation*}f(q^k a + b) = f(a) + f(b)\end{equation*}
whenever \begin{math}0 \leq b < q^k\end{math}. The class of (strongly) \begin{math}q\end{math}-\emph{multiplicative} functions is defined in an analogous fashion.
Loosely speaking, (strong) \begin{math}q\end{math}-additivity of a function means that
it can be evaluated by breaking up the base-\begin{math}q\end{math} expansion. Typical
examples of strongly \begin{math}q\end{math}-additive functions are the \begin{math}q\end{math}-ary sum of
digits and the number of occurrences of a specified nonzero digit.

There are, however, many simple and natural functions based on the \begin{math}q\end{math}-ary expansion that are not \begin{math}q\end{math}-additive. A very basic example of this kind are \emph{block counts}: the number of occurrences of a certain block of digits in the \begin{math}q\end{math}-ary expansion. This and other examples provide the motivation for the present paper, in which we define and study a larger class of functions with comparable properties.

\begin{defi}
An arithmetic function (a function defined on the set of nonnegative integers) is called \begin{math}q\end{math}-\emph{quasiadditive} if there exists some nonnegative integer \begin{math}r\end{math} such that
\begin{equation}\label{eq:q-add}
f(q^{k+r}a + b) = f(a) + f(b)
\end{equation}
whenever \begin{math}0 \leq b < q^k\end{math}. Likewise, \begin{math}f\end{math} is said to be \begin{math}q\end{math}-\emph{quasimultiplicative} if it satisfies the identity
\begin{equation}\label{eq:q-mult}
f(q^{k+r}a + b) = f(a)f(b)
\end{equation}
for some fixed nonnegative integer \begin{math}r\end{math} whenever \begin{math}0 \leq b < q^k\end{math}.
\end{defi}

We remark that the special case \begin{math}r = 0\end{math} is exactly strong
\begin{math}q\end{math}-additivity, so strictly speaking the term ``strongly
\begin{math}q\end{math}-quasiadditive function'' might be more appropriate. However, since
we are not considering a weaker version (for which natural examples
seem to be much harder to find), we do not make a distinction. As a further caveat, we remark that the term ``quasiadditivity'' has also been used in \cite{allouche:1993} for a related, but slightly weaker condition.

In the
following section, we present a variety of examples of
\begin{math}q\end{math}-quasiadditive and \begin{math}q\end{math}-quasimultipli\-cative functions. 
In Section~\ref{sec:elem-properties}, we give some general properties of
such functions. Since most of our examples also belong to the related class of \begin{math}q\end{math}-regular
functions, we discuss the connection in Section~\ref{sec:q-regular}.
Finally, we prove a general central limit theorem for \begin{math}q\end{math}-quasiadditive and -multiplicative functions that contains both old
and new examples as special cases.

\section{Examples of $q$-quasiadditive and $q$-quasimultiplicative  functions}
\label{sec:exampl-q-quasiadd}
Let us now back up the abstract concept of \begin{math}q\end{math}-quasiadditivity by some concrete examples.

\subsection*{Block counts}

As mentioned in the introduction, the number of occurrences of a fixed nonzero
digit is a typical example of a \begin{math}q\end{math}-additive function. However, the
number of occurrences of a given block \begin{math}B = \epsilon_1\epsilon_2
\cdots \epsilon_{\ell}\end{math} of digits in the expansion of a nonnegative
integer \begin{math}n\end{math}, which we denote by \begin{math}c_B(n)\end{math}, does not represent a
\begin{math}q\end{math}-additive function. The reason is simple: the \begin{math}q\end{math}-ary expansion of \begin{math}q^ka + b\end{math} is obtained by joining the expansions of \begin{math}a\end{math} and \begin{math}b\end{math}, so occurrences of \begin{math}B\end{math} in \begin{math}a\end{math} and occurrences of \begin{math}B\end{math} in \begin{math}b\end{math} are counted by \begin{math}c_B(a) + c_{B}(b)\end{math}, but occurrences that involve digits of both \begin{math}a\end{math} and \begin{math}b\end{math} are not.

However, if \begin{math}B\end{math} is a block different from \begin{math}00\cdots0\end{math}, then \begin{math}c_B\end{math} is \begin{math}q\end{math}-quasiadditive: note that the representation of \begin{math}q^{k+\ell} a + b\end{math} is of the form
\begin{equation*}\underbrace{a_1 a_2 \cdots a_{\mu}}_{\text{expansion of } a} \underbrace{0 0 \cdots 0_{\vphantom{\mu}}}_{\ell \text{ zeros}} \underbrace{b_1 b_2 \cdots {b_{\nu}}_{\vphantom{\mu}}}_{\text{expansion of } b}\end{equation*}
whenever \begin{math}0 \leq b < q^k\end{math}, so occurrences of the block \begin{math}B\end{math} have to belong to either \begin{math}a\end{math} or \begin{math}b\end{math} only. This implies that
\begin{math}c_B(q^{k+\ell} a + b) = c_B(a) + c_B(b)\end{math},
with one small caveat: if the block starts and/or ends with a sequence
of zeros, then the count needs to be adjusted by assuming the digital
expansion of a nonnegative integer to be padded with zeros on the left
and on the right. 

For example, let \begin{math}B\end{math} be the block \begin{math}0101\end{math} in base \begin{math}2\end{math}. The binary representations of \begin{math}469\end{math} and \begin{math}22\end{math} are \begin{math}111010101\end{math} and \begin{math}10110\end{math}, respectively, so we have \begin{math}c_B(469) = 2\end{math} and \begin{math}c_B(22) = 1\end{math} (note the occurrence of \begin{math}0101\end{math} at the beginning of \begin{math}10110\end{math} if we assume the expansion to be padded with zeros), as well as
\begin{equation*}c_B(240150) = c_B(2^9 \cdot 469 + 22) = c_B(469) + c_B(22) = 3.\end{equation*}
Indeed, the block \begin{math}B\end{math} occurs three times in the expansion of \begin{math}240150\end{math}, which is \begin{math}111010101000010110\end{math}.

\subsection*{The number of runs and the Gray code}

The number of ones in the Gray code of a nonnegative integer \begin{math}n\end{math},
which we denote by \begin{math}\hg(n)\end{math}, is also equal to the number of runs
(maximal sequences of consecutive identical digits) in the binary
representations of \begin{math}n\end{math} (counting the number of runs in the
representation of \begin{math}0\end{math} as \begin{math}0\end{math}); the sequence defined by \begin{math}\hg(n)\end{math} is
\href{http://oeis.org/A005811}{A005811} in Sloane's On-Line Encyclopedia of Integer Sequences
\cite{OEIS:2016}. An analysis of its expected value is performed in \cite{Flajolet-Ramshaw:1980:gray}. The function \begin{math}\hg\end{math} is \begin{math}2\end{math}-quasiadditive up to some minor
modification: set \begin{math}f(n) = \hg(n)\end{math} if \begin{math}n\end{math} is even and \begin{math}f(n) = \hg(n)
+ 1\end{math} if \begin{math}n\end{math} is odd. The new function \begin{math}f\end{math} can be interpreted as the
total number of occurrences of the two blocks \begin{math}01\end{math} and \begin{math}10\end{math} in the
binary expansion (considering binary expansions to be padded with zeros at both ends), so the argument of the previous example applies again and shows that \begin{math}f\end{math} is \begin{math}2\end{math}-quasiadditive.

\subsection*{The nonadjacent form and its Hamming weight}

The nonadjacent form (NAF) of a nonnegative integer is the unique
base-\begin{math}2\end{math} representation with digits \begin{math}0,1,-1\end{math} (\begin{math}-1\end{math} is usually
represented as \begin{math}\overline{1}\end{math} in this context) and the additional
requirement that there may not be two adjacent nonzero digits, see
\cite{Reitwiesner:1960}. For example, the NAF of \begin{math}27\end{math} is
\begin{math}100\overline{1}0\overline{1}\end{math}. It is well known that the NAF always
has minimum Hamming weight (i.e., the number of nonzero digits) among all
possible binary representations with this particular digit set,
although it may not be unique with this property (compare, e.g.,
\cite{Reitwiesner:1960} with \cite{Joye-Yen:2000:optim-left}).

The Hamming weight \begin{math}\hn\end{math} of the nonadjacent form has been analysed in
some detail \cite{Thuswaldner:1999,Heuberger-Kropf:2013:analy}, and it is also an example of a \begin{math}2\end{math}-quasiadditive function. It is not difficult to see that \begin{math}\hn\end{math} is characterised by the recursions
$\hn(2n) = \hn(n)$, $\hn(4n+1) = \hn(n) + 1$, $\hn(4n-1) = \hn(n) + 1$
together with the initial value \begin{math}\hn(0) = 0\end{math}. The identity
\begin{equation*}\hn(2^{k+2}a + b) = \hn(a) + \hn(b)\end{equation*}
can be proved by induction. In Section~\ref{sec:q-regular}, this example will be generalised and put into a larger context.

\subsection*{The number of optimal $\{0,1,-1\}$-representations}

As mentioned above, the NAF may not be the only representation with minimum Hamming weight among all possible binary representations with digits \begin{math}0,1,-1\end{math}. The number of optimal representations of a given nonnegative integer \begin{math}n\end{math} is therefore a quantity of interest in its own right. Its average over intervals of the form \begin{math}[0,N)\end{math} was studied by Grabner and Heuberger \cite{Grabner-Heuberger:2006:Number-Optimal}, who also proved that the number \begin{math}\ro(n)\end{math} of optimal representations of \begin{math}n\end{math} can be obtained in the following way:

\begin{lemma}[Grabner--Heuberger \cite{Grabner-Heuberger:2006:Number-Optimal}]\label{lemma:opt-representations-recursion}
Let sequences \begin{math}u_i\end{math} (\begin{math}i=1,2,\ldots,5\end{math}) be given recursively by
\begin{equation*}u_1(0) = u_2(0) = \cdots = u_5(0) = 1, \qquad u_1(1) = u_2(1) = 1,\ u_3(1) = u_4(1) = u_5(1) = 0,\end{equation*}
and
\begin{align*}
u_1(2n) = u_1(n), \qquad & u_1(2n+1) = u_2(n) + u_4(n+1), \\
u_2(2n) = u_1(n), \qquad & u_2(2n+1) = u_3(n), \\
u_3(2n) = u_2(n), \qquad & u_3(2n+1) = 0, \\
u_4(2n) = u_1(n), \qquad & u_4(2n+1) = u_5(n+1), \\
u_5(2n) = u_4(n), \qquad & u_5(2n+1) = 0.
\end{align*}
The number \begin{math}\ro(n)\end{math} of optimal representations of \begin{math}n\end{math} is equal to \begin{math}u_1(n)\end{math}.
\end{lemma}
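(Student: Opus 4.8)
The plan is to give each of the five sequences $u_1,\dots,u_5$ an interpretation in terms of optimal representations and then to check that these interpretations satisfy the prescribed recursion and initial conditions. Write $\hn(n)$ for the minimal Hamming weight of a $\{0,1,-1\}$-representation of $n$ (which is attained by the NAF, as recalled above), and extend it to negative integers by $\hn(-n)=\hn(n)$, so in particular $\hn(-1)=1$. First I would record the basic recursion for $\hn$ and $\ro$: the least significant digit $d_0$ of a $\{0,1,-1\}$-representation of $n$ has the same parity as $n$, so $d_0=0$ when $n$ is even and $d_0\in\{1,\overline1\}$ when $n$ is odd; deleting $d_0$ gives, respectively, a representation of $n/2$, of $(n-1)/2$, or of $(n+1)/2$. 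These weight-preserving bijections yield $\hn(2n)=\hn(n)$, $\hn(2n+1)=1+\min(\hn(n),\hn(n+1))$, $\ro(2n)=\ro(n)$, and
\begin{equation*}
\ro(2n+1)=\ro(n)\,[\hn(n)\le\hn(n+1)]+\ro(n+1)\,[\hn(n+1)\le\hn(n)],
\end{equation*}
where the two summands count the optimal representations of $2n+1$ ending in $1$ and in $\overline1$ respectively. A straightforward induction on $n$ using these identities also gives the key bound $\lvert\hn(n)-\hn(n+1)\rvert\le1$.

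Next I would propose the interpretations
\begin{align*}
u_1(n)&=\ro(n), & u_2(n)&=\ro(n)\,[\hn(n)\le\hn(n+1)], & u_3(n)&=\ro(n)\,[\hn(n+1)=\hn(n)+1],\\
u_4(n)&=\ro(n)\,[\hn(n-1)\ge\hn(n)], & u_5(n)&=\ro(n)\,[\hn(n-1)=\hn(n)+1]. &&
\end{align*}
Here $u_2(n)$ counts the optimal representations of $n$ that stay optimal when an extra digit $1$ is appended at the low end (so that the represented value becomes $2n+1$), and $u_4(n)$ counts those that stay optimal when a digit $\overline1$ is appended (value $2n-1$); using the bound above, each of these counts is either all of $\ro(n)$ or none of it, which is why an indicator appears. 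The sequences $u_3$ and $u_5$ are the strict variants of $u_2$ and $u_4$: they equal $\ro(n)$ precisely when the neighbour $n+1$, respectively $n-1$, has strictly larger minimal weight than $n$. With these definitions the recursion $u_1(2n+1)=u_2(n)+u_4(n+1)$ is exactly the formula for $\ro(2n+1)$ displayed above, split according to the last digit of the representation.

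It then remains to verify the eight recursions for $u_2,\dots,u_5$ on even and odd arguments, together with the initial values at $n=0$ and $n=1$. Each reduces, after substituting $\hn(2n)=\hn(n)$ and $\hn(2n+1)=1+\min(\hn(n),\hn(n+1))$ into the defining indicators, to a finite case distinction that is settled by the bound $\lvert\hn(n)-\hn(n+1)\rvert\le1$; for instance $u_3(2n+1)=0$ holds because $\hn(2n+2)=\hn(n+1)$ can never equal $\hn(2n+1)+1=2+\min(\hn(n),\hn(n+1))$, as that would force $\hn(n+1)=\hn(n)+2$. Since the recursion in the statement determines $(u_1,\dots,u_5)$ uniquely from its initial data, the combinatorially defined sequences must agree with it, and in particular $\ro(n)=u_1(n)$.

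The real work is the guessing step: finding the correct meaning of the four auxiliary sequences, and recognising that one must distinguish ``weakly optimal after appending a digit'' from ``strictly weight-decreasing'' on each of the two sides. Once the interpretations are fixed, the proof becomes an essentially mechanical check; the only places that demand a little care are the small values $n\in\{0,1\}$ and the convention $\hn(-1)=1$, which has to be compatible with the prescribed base values $u_i(0)=1$ and $u_i(1)\in\{0,1\}$.
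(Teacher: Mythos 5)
Your proposal is correct, and it actually supplies something the paper deliberately omits: Lemma~\ref{lemma:opt-representations-recursion} is imported from Grabner--Heuberger without proof, so there is no in-paper argument to compare against. I checked your interpretations in detail and they work. The recursions $\hn(2n)=\hn(n)$, $\hn(2n+1)=1+\min(\hn(n),\hn(n+1))$ and the corresponding formula for $\ro(2n+1)$ are right, the induction giving $\lvert \hn(n)-\hn(n+1)\rvert\le 1$ goes through (one even gets $\hn(2n+1)\ge\max(\hn(2n),\hn(2n+2))$, which is exactly what kills $u_3(2n+1)$ and $u_5(2n+1)$), and with $u_2(n)=\ro(n)[\hn(n)\le\hn(n+1)]$, $u_3(n)=\ro(n)[\hn(n+1)=\hn(n)+1]$, $u_4(n)=\ro(n)[\hn(n-1)\ge\hn(n)]$, $u_5(n)=\ro(n)[\hn(n-1)=\hn(n)+1]$ all ten recursions and all ten initial values (including the $n=0$ cases under the convention $\hn(-1)=1$) verify exactly as you describe; the uniqueness of the solution of the recursion then closes the argument. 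The ``all or nothing'' observation --- that appending a fixed digit to \emph{any} optimal representation of $n$ changes the weight by the same amount, so the count of representations surviving the extension is $\ro(n)$ times an indicator --- is the key structural point, and it is what lets a priori set-valued quantities collapse to the five scalar sequences. This is in the same spirit as the original Grabner--Heuberger derivation, which obtains the linear recursion from an automaton whose states record how the representation built so far can be optimally extended; your indicator conditions on $\hn(n\pm1)-\hn(n)$ are in effect an explicit semantic description of those states, which makes the verification mechanical once they are guessed. The only cosmetic caveat is the usual one that representations are counted up to leading zeros, so that deleting or appending a least significant digit is a genuine bijection; this is the standard convention and does not affect the argument.
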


A straightforward calculation shows that
\begin{equation}\label{eq:8n_a}
\begin{aligned}
&u_1(8n) = u_2(8n) = \cdots = u_5(8n) = u_1(8n+1) = u_2(8n+1) = u_1(n),\\
&u_3(8n+1) = u_4(8n+1) = u_5(8n+1) = 0.
\end{aligned}
\end{equation}
This gives us the following result (see the full version of this
extended abstract for a detailed proof):

\begin{lemma}\label{lem:optrep}
The number of optimal \begin{math}\{0,1,-1\}\end{math}-representations of a nonnegative integer is a \begin{math}2\end{math}-quasimulti\-plicative function. Specifically, for any three nonnegative integers \begin{math}a,b,k\end{math} with \begin{math}b < 2^k\end{math}, we have
\begin{equation*}\ro(2^{k+3}a + b) = \ro(a)\ro(b).\end{equation*}
\end{lemma}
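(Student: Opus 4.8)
The plan is to work entirely with the auxiliary sequences $u_1,\dots,u_5$ of Lemma~\ref{lemma:opt-representations-recursion}. Since $\ro=u_1$, it suffices to show that $u_1(2^{k+3}a+b)=u_1(a)\,u_1(b)$ whenever $0\le b<2^k$, and I would prove this by induction on $k$. A direct induction that keeps track only of $u_1$ and only of the range $b<2^k$ does not close, however: the recursions $u_1(2n+1)=u_2(n)+u_4(n+1)$ and $u_4(2n+1)=u_5(n+1)$ feed in the ``carried'' argument $n+1$, and when the low part $b$ is odd, say $b=2b'+1$, the halved-and-incremented value $b'+1$ can equal $2^k$ exactly --- precisely for $b=2^{k+1}-1$. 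Anticipating this is the one point where the proof needs foresight; the remedy is to strengthen the claim in two ways at once: carry all five sequences $u_1,\dots,u_5$ through the induction, and widen the range to the closed interval $0\le b\le 2^k$.

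Concretely, I would prove by induction on $k\ge 0$ the assertion $Q(k)$: for all $a\ge 0$, all $b$ with $0\le b\le 2^k$, and all $i\in\{1,\dots,5\}$,
\[
u_i(2^{k+3}a+b)=u_1(a)\,u_i(b).
\]
The base case $Q(0)$ involves only $b\in\{0,1\}$ and is exactly \eqref{eq:8n_a} combined with the initial values: for $b=0$ it reads $u_i(8a)=u_1(a)=u_1(a)u_i(0)$, and for $b=1$ it reads $u_1(8a+1)=u_2(8a+1)=u_1(a)$ and $u_3(8a+1)=u_4(8a+1)=u_5(8a+1)=0$, which matches $u_1(a)u_i(1)$ entry by entry.

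For the inductive step, assume $Q(k)$ and take $0\le b\le 2^{k+1}$. If $b=2b'$ is even then $0\le b'\le 2^k$ and $2^{k+4}a+b=2(2^{k+3}a+b')$, so applying the five recursion cases with even argument together with $Q(k)$ rewrites each $u_i(2^{k+4}a+b)$ as $u_1(a)$ times the corresponding value at $b'$, which is $u_1(a)u_i(2b')=u_1(a)u_i(b)$. If $b=2b'+1$ is odd then $0\le b'\le 2^k-1$, hence both $n=2^{k+3}a+b'$ and $n+1=2^{k+3}a+(b'+1)$ lie in the scope of $Q(k)$; substituting into the five recursion cases with odd argument (e.g.\ $u_1(2n+1)=u_2(n)+u_4(n+1)=u_1(a)\bigl(u_2(b')+u_4(b'+1)\bigr)=u_1(a)u_1(2b'+1)$) again gives $u_1(a)u_i(b)$ in each case. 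This closes the induction, and specialising $Q(k)$ to $i=1$ and $b<2^k$ yields the lemma.

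The only genuine obstacle is pinning down the correct strengthening; once the closed range $b\le 2^k$ is in place, the carry terms are always covered by the hypothesis and the remainder is a mechanical verification of the ten recursion cases. Note that \eqref{eq:8n_a} --- i.e.\ the effect of the three intermediate zero digits --- is used only to supply the base case $Q(0)$; the inductive step never invokes it again.
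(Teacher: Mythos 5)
Your induction is correct and completes exactly the argument the paper sketches: the displayed identity \eqref{eq:8n_a} is your base case $Q(0)$, and your strengthening of the induction hypothesis (carrying all five sequences $u_1,\dots,u_5$ and widening the range to the closed interval $0\le b\le 2^k$ so that the carried argument $b'+1$ stays in scope) is precisely what is needed to make the digit-peeling induction close. This is essentially the paper's approach (the detailed proof is deferred to the full version); the paper also later recovers the same statement via the linear representation and Theorem~\ref{theorem:reg-mult}, using $M_0^3=\bfv\bfu^{t}$.
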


In Section~\ref{sec:q-regular}, we will show that this is also an instance of a more general phenomenon.

\subsection*{The run length transform and cellular automata}

The \emph{run length transform} of a sequence is defined in a recent paper of Sloane \cite{Sloane:number-on}: it is based on the binary representation, but could in principle also be generalised to other bases. Given a sequence \begin{math}s_1,s_2,\ldots\end{math}, its run length transform is obtained by the rule
\begin{equation*}t(n) = \prod_{i \in \mathcal{L}(n)} s_i,\end{equation*}
where \begin{math}\mathcal{L}(n)\end{math} is the multiset of run lengths of \begin{math}n\end{math} (lengths
of blocks of consecutive ones in the binary representation). For
example, the binary expansion of \begin{math}1910\end{math} is \begin{math}11101110110\end{math}, so the
multiset \begin{math}\mathcal{L}(n)\end{math} of run lengths would be \begin{math}\{3,3,2\}\end{math}, giving
\begin{math}t(1910) = s_2 s_3^2\end{math}.

A typical example is obtained for the sequence of Jacobsthal numbers given by the formula \begin{math}s_n = \frac13 (2^{n+2} - (-1)^n)\end{math}. The associated run length transform \begin{math}t_n\end{math} (sequence \href{http://oeis.org/A071053}{A071053} in the OEIS \cite{OEIS:2016}) counts the number of odd coefficients in the expansion of \begin{math}(1+x+x^2)^n\end{math}, and it can also be interpreted as the number of active cells at the \begin{math}n\end{math}-th generation of a certain cellular automaton. Further examples stemming from cellular automata can be found in Sloane's paper \cite{Sloane:number-on}.

The argument that proved \begin{math}q\end{math}-quasiadditivity of block counts also applies here, and indeed it is easy to see that the identity
\begin{equation*}t(2^{k+1}a + b) = t(a)t(b),\end{equation*}
where \begin{math}0 \leq b < 2^k\end{math}, holds for the run length transform of any sequence, meaning that any such transform is \begin{math}2\end{math}-quasimultiplicative. In fact, it is not difficult to show that every \begin{math}2\end{math}-quasimultiplicative function with parameter \begin{math}r=1\end{math} is the run length transform of some sequence.

\section{Elementary properties}
\label{sec:elem-properties}
Now that we have gathered some motivating examples for the concepts of \begin{math}q\end{math}-quasiadditivity and \begin{math}q\end{math}-quasi\-multiplicativity, let us present some simple results about functions with these properties. First of all, let us state an obvious relation between \begin{math}q\end{math}-quasiadditive and \begin{math}q\end{math}-quasimultiplicative functions:

\begin{prop}\label{prop:trivial}
If a function \begin{math}f\end{math} is \begin{math}q\end{math}-quasiadditive, then the function defined by \begin{math}g(n) = c^{f(n)}\end{math} for some positive constant \begin{math}c\end{math} is \begin{math}q\end{math}-quasimultiplicative. Conversely, if \begin{math}f\end{math} is a \begin{math}q\end{math}-quasimultiplicative function that only takes positive values, then the function defined by \begin{math}g(n) = \log_c f(n)\end{math} for some positive constant \begin{math}c \neq 1\end{math} is \begin{math}q\end{math}-quasiadditive.
\end{prop}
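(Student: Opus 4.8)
The plan is to verify both implications directly from the defining functional equations \eqref{eq:q-add} and \eqref{eq:q-mult}, exploiting only the fact that exponentiation turns sums into products and the logarithm does the reverse. For the first implication, suppose $f$ is $q$-quasiadditive with parameter $r$, so that $f(q^{k+r}a+b) = f(a)+f(b)$ whenever $0 \le b < q^k$. Then for $g(n) = c^{f(n)}$ with $c > 0$ we compute
\begin{equation*}
g(q^{k+r}a + b) = c^{f(q^{k+r}a+b)} = c^{f(a)+f(b)} = c^{f(a)}c^{f(b)} = g(a)g(b),
\end{equation*}
which is exactly \eqref{eq:q-mult} with the same parameter $r$; hence $g$ is $q$-quasimultiplicative.

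For the converse, suppose $f$ is $q$-quasimultiplicative with parameter $r$ and takes only positive values, so that $\log_c f(n)$ is well defined for every nonnegative integer $n$ and every base $c > 0$ with $c \neq 1$. From $f(q^{k+r}a+b) = f(a)f(b)$ for $0 \le b < q^k$ we obtain, for $g(n) = \log_c f(n)$,
\begin{equation*}
g(q^{k+r}a+b) = \log_c\bigl(f(q^{k+r}a+b)\bigr) = \log_c\bigl(f(a)f(b)\bigr) = \log_c f(a) + \log_c f(b) = g(a) + g(b),
\end{equation*}
which is \eqref{eq:q-add} with the same $r$, so $g$ is $q$-quasiadditive.

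There is essentially no obstacle here; the statement is a formal consequence of the definitions. The only points worth flagging are that the parameter $r$ carries over unchanged under both transformations, and that the positivity hypothesis in the converse direction is precisely what guarantees that $\log_c f$ — and therefore $g$ — is defined on the whole set of nonnegative integers.
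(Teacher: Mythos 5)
Your proof is correct and is exactly the direct verification the paper has in mind (the paper omits the proof, calling the relation ``obvious''). Nothing is missing: the parameter $r$ indeed carries over unchanged in both directions, and the positivity hypothesis is used precisely where you say it is.
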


The next proposition deals with the parameter \begin{math}r\end{math} in the definition of a \begin{math}q\end{math}-quasiadditive function:

\begin{prop}
If the arithmetic function \begin{math}f\end{math} satisfies
$f(q^{k+r}a + b) = f(a) + f(b)$
for some fixed nonnegative integer \begin{math}r\end{math} whenever \begin{math}0 \leq b < q^k\end{math}, then it also satisfies
$f(q^{k+s}a + b) = f(a) + f(b)$
for all nonnegative integers \begin{math}s \geq r\end{math} whenever \begin{math}0 \leq b < q^k\end{math}.
\end{prop}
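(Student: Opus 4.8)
The plan is to obtain the statement as a direct consequence of the hypothesis after a reindexing, with a short induction on $s$ as an equally painless alternative. Fix $s \geq r$ and nonnegative integers $a,b,k$ with $0 \leq b < q^k$. The essential point is a monotonicity observation: since $s \geq r$ we have $q^k \leq q^{k+(s-r)}$, so writing $k' = k + (s-r) \geq 0$ we still have $0 \leq b < q^{k'}$. Hence the pair $(a,b)$ (with splitting position $k'$) satisfies exactly the constraint under which the assumed identity applies, giving $f(q^{k'+r}a+b) = f(a)+f(b)$. Since $k'+r = k+s$, this reads $f(q^{k+s}a+b) = f(a)+f(b)$, which is what we wanted.

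Alternatively, one can argue by induction on $s$: the base case $s = r$ is the hypothesis, and for the step from $s$ to $s+1$ one rewrites $q^{k+s+1}a + b = q^{(k+1)+s}a + b$ and applies the inductive hypothesis with $k+1$ in place of $k$, which is legitimate because $0 \le b < q^k \leq q^{k+1}$.

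There is essentially no obstacle here; the only thing to be careful about is keeping the roles of the three quantifiers $a$, $b$, $k$ straight, and in particular noticing that the hypothesis is being invoked with a value of the ``splitting position'' different from the one appearing in the conclusion. The degenerate cases $a = 0$ or $b = 0$ require no separate treatment, since the hypothesis already forces $f(0) = 0$ (take $a = b = 0$), and after that the reindexing argument applies verbatim regardless of whether $a$ or $b$ vanishes.
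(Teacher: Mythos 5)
Your reindexing argument is exactly the paper's proof: since $0 \leq b < q^k$ implies $0 \leq b < q^{k+s-r}$ for $s \geq r$, one applies the hypothesis with $k' = k+s-r$ in place of $k$ to get $f(q^{k+s}a+b) = f(q^{k'+r}a+b) = f(a)+f(b)$. The alternative induction and the remarks on degenerate cases are harmless but not needed.
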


\begin{proof}
If \begin{math}a,b\end{math} are nonnegative integers with \begin{math}0 \leq b < q^k\end{math}, then clearly also \begin{math}0 \leq b < q^{k+s-r}\end{math} if \begin{math}s \geq r\end{math}, and thus
\begin{equation*}f(q^{k+s}a + b) = f(q^{(k+s-r)+r}a + b) = f(a) + f(b).\end{equation*}
\end{proof}

\begin{cor}\label{cor:lin_comb}
If two arithmetic functions \begin{math}f\end{math} and \begin{math}g\end{math} are \begin{math}q\end{math}-quasiadditive functions, then so is any linear combination \begin{math}\alpha f + \beta g\end{math} of the two.
\end{cor}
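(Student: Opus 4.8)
The plan is to reduce everything to the preceding proposition, which lets us normalise the parameter $r$. Since $f$ is $q$-quasiadditive, there is some nonnegative integer $r_1$ with $f(q^{k+r_1}a+b) = f(a)+f(b)$ whenever $0 \le b < q^k$, and similarly $g$ comes with its own parameter $r_2$. The first thing I would do is invoke the previous proposition to replace both $r_1$ and $r_2$ by their maximum $r := \max(r_1,r_2)$, so that both functional equations hold with one and the same shift parameter $r$.

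With that common $r$ in hand, I would simply compute, for arbitrary nonnegative integers $a,b,k$ with $0 \le b < q^k$,
\begin{equation*}
(\alpha f + \beta g)(q^{k+r}a + b) = \alpha\bigl(f(a)+f(b)\bigr) + \beta\bigl(g(a)+g(b)\bigr) = (\alpha f + \beta g)(a) + (\alpha f + \beta g)(b),
\end{equation*}
which is exactly the defining identity \eqref{eq:q-add} for the linear combination $\alpha f + \beta g$ with parameter $r$. Hence $\alpha f + \beta g$ is $q$-quasiadditive.

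There is essentially no obstacle here: the only subtlety is that $f$ and $g$ need not a priori share the same parameter, and this is precisely what the monotonicity statement of the previous proposition takes care of. The argument is otherwise a one-line verification using linearity of $\alpha f + \beta g$ in $f$ and $g$, so in the write-up I would keep it to the two displayed steps above.
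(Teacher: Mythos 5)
Your proposal is correct and follows exactly the paper's argument: use the preceding proposition to upgrade both functions to a common parameter $r = \max(r_1, r_2)$, and then the defining identity for $\alpha f + \beta g$ follows by linearity. Nothing further is needed.
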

\begin{proof}
In view of the previous proposition, we may assume the parameter \begin{math}r\end{math} in~\eqref{eq:q-add} to be the same for both functions. The statement follows immediately.
\end{proof}

Finally, we observe that \begin{math}q\end{math}-quasiadditive
and \begin{math}q\end{math}-quasimultiplicative functions can be
computed by breaking the \begin{math}q\end{math}-ary expansion into
pieces. A detailed proof can be found in the full version:

\begin{lemma}\label{lem:simplefacts}
If \begin{math}f\end{math} is a \begin{math}q\end{math}-quasiadditive (\begin{math}q\end{math}-quasimultiplicative) function, then
\begin{itemize}
\item \begin{math}f(0) = 0\end{math} (\begin{math}f(0) = 1\end{math}, respectively, unless \begin{math}f\end{math} is identically \begin{math}0\end{math}),
\item \begin{math}f(qa) = f(a)\end{math} for all nonnegative integers \begin{math}a\end{math}.
\end{itemize}
\end{lemma}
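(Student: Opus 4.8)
The plan is to read off all three assertions directly from the defining functional equation by specialising the free parameters $a$, $b$, $k$; I will carry out the $q$-quasiadditive case, the $q$-quasimultiplicative one being entirely parallel with $+$ replaced by $\cdot$. First I would prove $f(0)=0$. The substitution $a=b=0$, $k=0$ is admissible since $0\le b=0<1=q^{0}$, and \eqref{eq:q-add} then reads $f(0)=f(q^{r}\cdot 0+0)=f(0)+f(0)$, whence $f(0)=0$. In the multiplicative case the same substitution gives $f(0)=f(0)^{2}$, so $f(0)\in\{0,1\}$; moreover, if $f(0)=0$, then putting $a=0$ in \eqref{eq:q-mult} yields $f(b)=f(0)f(b)=0$ for every $b<q^{k}$, and since $k$ is arbitrary this forces $f\equiv 0$. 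Hence $f(0)=1$ unless $f$ vanishes identically.

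Next I would record the auxiliary identity $f(q^{m+r}a)=f(a)$, valid for all nonnegative integers $m$ and $a$: this is just \eqref{eq:q-add} with $b=0$ (legitimate because $0\le 0<q^{m}$) together with $f(0)=0$ from the first step, and in the multiplicative setting $f(q^{m+r}a)=f(a)f(0)=f(a)$ instead. This gives invariance of $f$ under multiplication by $q^{r}$, which is weaker than the desired $f(qa)=f(a)$ when $r>1$. To bridge the gap I would apply the auxiliary identity at two different exponents: with $a$ replaced by $qa$ and $m=0$ it gives $f(qa)=f(q^{r}\cdot qa)=f(q^{r+1}a)$, while with the original $a$ and $m=1$ it gives $f(q^{r+1}a)=f(a)$; chaining the two yields $f(qa)=f(a)$. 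The same three lines work verbatim in the multiplicative case.

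I do not expect a genuine obstacle. The only point needing a moment's thought is the last one: the naive substitution $b=0$ by itself yields only $q^{r}$-invariance, so one must notice the trick of composing the functional equation with itself at the exponents $r$ and $r+1$. One should also keep the degenerate case $f\equiv 0$ in mind in the multiplicative statement, which is precisely why that exception is recorded in the lemma.
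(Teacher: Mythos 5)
Your proof is correct, and since the paper defers this proof to its full version there is nothing here to diverge from: specialising the functional equation at $a=b=k=0$ for the value at $0$, and then comparing the two instances $f(q^{r}\cdot qa)=f(qa)$ and $f(q^{r+1}a)=f(a)$ to upgrade $q^{r}$-invariance to $f(qa)=f(a)$, is exactly the intended elementary argument. You also correctly isolate the only delicate points, namely the degenerate case $f\equiv 0$ in the multiplicative setting and the fact that $b=0$ alone gives only $q^{r}$-invariance.
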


\begin{prop}\label{prop:split}
Suppose that the function \begin{math}f\end{math} is \begin{math}q\end{math}-quasiadditive with parameter \begin{math}r\end{math}, i.e., \begin{math}f(q^{k+r}a + b) = f(a) + f(b)\end{math} whenever \begin{math}0 \leq b < q^k\end{math}. Going from left to right, split the $q$-ary expansion of \begin{math}n\end{math} into blocks by inserting breaks after each run of \begin{math}r\end{math} or more zeros. If these blocks are the $q$-ary representations of \begin{math}n_1,n_2,\ldots,n_{\ell}\end{math}, then we have
\begin{equation*}f(n) = f(n_1) + f(n_2) + \cdots + f(n_{\ell}).\end{equation*}
Moreover, if \begin{math}m_i\end{math} is
the greatest divisor
of \begin{math}n_i\end{math} which
are not divisible by \begin{math}q\end{math} for $i=1,\ldots,\ell$, then
\begin{equation*}f(n) = f(m_1) + f(m_2) + \cdots + f(m_{\ell}).\end{equation*}
Analogous statements hold for \begin{math}q\end{math}-quasimultiplicative functions, with sums replaced by products.
\end{prop}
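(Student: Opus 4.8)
The plan is to establish the first identity by induction on the number $\ell$ of blocks, peeling off the least significant block at each step, and then to read off the remaining assertions from Lemma~\ref{lem:simplefacts}. Throughout, the only inputs needed are the defining functional equation~\eqref{eq:q-add} and the fact that $f(qa)=f(a)$ and $f(0)=0$.

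If $\ell=1$ there is no break, the single block is the whole $q$-ary word, so $n_1=n$ and there is nothing to prove. Suppose now $\ell\ge 2$, and let the lowest break fall between digit positions $i$ and $i-1$; by the construction of the splitting this means that digits $i,i+1,\dots,i+t-1$ are zero for some $t\ge r$ while the digit in position $i-1$ is nonzero (so $i\ge 1$ and the run is maximal on its low side). Reading the $q$-ary word of $n$, the digits strictly above position $i$ form a number $N$, the digits in positions $i-1,\dots,0$ form the block $n_\ell$, and $n=q^{i}N+n_\ell$ with $0\le n_\ell<q^{i}$. Since positions $i,\dots,i+r-1$ are zero we have $q^{r}\mid N$; writing $N=q^{r}N'$ we obtain $n=q^{i+r}N'+n_\ell$, so~\eqref{eq:q-add} with $k=i$ gives $f(n)=f(N')+f(n_\ell)$, while $f(N')=f(N)$ after $r$ applications of $f(qa)=f(a)$ from Lemma~\ref{lem:simplefacts}.

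It remains to check that running the splitting procedure on $N$ reproduces exactly $n_1,\dots,n_{\ell-1}$. The word of $N$ is obtained from that of $n$ by deleting the bottom $i$ digits; since the digit just below position $i$ is nonzero, no maximal run of zeros of $n$ straddles this cut, so the maximal runs of zeros of $N$ are exactly the maximal runs of zeros of $n$ situated at positions $\ge i$, the lowest of them now being a trailing run of $N$ (hence producing no break). Consequently the breaks of $N$ are precisely the breaks of $n$ other than the lowest one, $N$ has $\ell-1$ blocks, and these are $n_1,\dots,n_{\ell-1}$ with unchanged numerical values, since each block is read off the same digits in the same relative positions. By the induction hypothesis $f(N)=f(n_1)+\cdots+f(n_{\ell-1})$, and combining this with the previous paragraph yields $f(n)=f(n_1)+\cdots+f(n_\ell)$. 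Finally, writing $n_i=q^{e_i}m_i$ where $q^{e_i}$ is the largest power of $q$ dividing $n_i$ (so $m_i$ is $n_i$ with its trailing $q$-ary zeros removed, and $e_i\ge 0$), iterating $f(qa)=f(a)$ gives $f(n_i)=f(m_i)$, hence $f(n)=f(m_1)+\cdots+f(m_\ell)$ as well. For a $q$-quasimultiplicative $f$ the entire argument goes through verbatim with every sum replaced by the corresponding product, using~\eqref{eq:q-mult} and $f(qa)=f(a)$ (the case $f\equiv 0$ being trivial).

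I expect the only genuine obstacle to be the bookkeeping in the third paragraph: verifying that peeling off the bottom block neither destroys an existing break nor creates a spurious one, and that the recovered blocks keep the same values. Everything else is just the repeated application of the defining functional equation together with $f(qa)=f(a)$.
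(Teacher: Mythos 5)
Your proof is correct and follows exactly the route the paper indicates: the paper's own proof is a one-line remark that the result follows by "a straightforward induction on $\ell$ together with the fact that $f(q^h a) = f(a)$", and your argument is precisely that induction carried out in detail (peeling off the least significant block via~\eqref{eq:q-add} with $k=i$, absorbing the $r$ guaranteed zeros into $f(q^r N')=f(N')$, and using $f(qa)=f(a)$ again for the passage from $n_i$ to $m_i$). The bookkeeping you flag in the third paragraph is the only part the paper leaves implicit, and you handle it correctly.
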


\begin{proof}
This is obtained by a straightforward induction on \begin{math}\ell\end{math} together with the fact that \begin{math}f(q^{h} a) = f(a)\end{math}, which follows from the previous lemma.
\end{proof}

\begin{example}
Recall that the Hamming weight of the NAF (which is the minimum Hamming weight of a \begin{math}\{0,1,-1\}\end{math}-representation) is \begin{math}2\end{math}-quasiadditive with parameter \begin{math}r=2\end{math}. To determine \begin{math}\hn(314\,159\,265)\end{math}, we split the binary representation, which is
\begin{math}10010101110011011000010100001,\end{math}
into blocks by inserting breaks after each run of at least two zeros:
\begin{equation*}100|101011100|110110000|1010000|1.\end{equation*}
The numbers \begin{math}n_1,n_2,\ldots,n_{\ell}\end{math} in the statement of the proposition are now \begin{math}4,348,432,80,1\end{math} respectively, and the numbers \begin{math}m_1,m_2,\ldots,m_{\ell}\end{math} are therefore \begin{math}1,87,27,5,1\end{math}. Now we use the values \begin{math}\hn(1) = 1\end{math}, \begin{math}\hn(5) = 2\end{math}, \begin{math}\hn(27) = 3\end{math} and \begin{math}\hn(87) = 4\end{math} to obtain
\begin{equation*}\hn(314\,159\,265) = 2\hn(1) + \hn(5) + \hn(27) + \hn(87) = 11.\end{equation*}
\end{example}

\begin{example}
In the same way, we consider the number of optimal representations \begin{math}\ro\end{math}, which is \begin{math}2\end{math}-quasimultiplicative with parameter \begin{math}r=3\end{math}. Consider for instance the binary representation of \begin{math}204\,280\,974\end{math}, namely 
\begin{math}1100001011010001010010001110\end{math}.
We split into blocks:
\begin{equation*}110000|101101000|101001000|1110.\end{equation*}
The four blocks correspond to the numbers \begin{math}48 = 16 \cdot 3\end{math}, \begin{math}360 = 8 \cdot 45\end{math}, \begin{math}328 = 8 \cdot 41\end{math} and \begin{math}14 = 2 \cdot 7\end{math}. Since \begin{math}\ro(3) = 2\end{math}, \begin{math}\ro(45) = 5\end{math}, \begin{math}\ro(41) = 1\end{math} and \begin{math}\ro(7) = 1\end{math}, we obtain
\begin{math}\ro(204\,280\,974) = 10\end{math}.
\end{example}

\section{$q$-Regular functions}\label{sec:q-regular}
In this section, we introduce \begin{math}q\end{math}-regular functions and examine the
connection to our concepts. See~\cite{Allouche-Shallit:2003:autom} for
more background on \begin{math}q\end{math}-regular sequences.

A function \begin{math}f\end{math} is \emph{\begin{math}q\end{math}-regular} if it can be expressed as \begin{math}f=\bfu^{t}\bff\end{math} for a vector \begin{math}\bfu\end{math}
and a vector-valued function \begin{math}\bff\end{math}, and there are matrices \begin{math}M_{i}\end{math}, \begin{math}0\leq i<q\end{math}, satisfying
\begin{equation}\label{eq:q-regular-recursive}
  \bff(qn+i)=M_{i}\bff(n)
\end{equation}
for \begin{math}0\leq i<q\end{math}, \begin{math}qn+i>0\end{math}. We set \begin{math}\bfv=\bff(0)\end{math}.

Equivalently, a function \begin{math}f\end{math} is \begin{math}q\end{math}-regular if and only if \begin{math}f\end{math} can be written as
\begin{equation}
  \label{eq:q-regular}
  f(n)=\bfu^{t} \prod_{i=0}^{L} M_{n_{i}}\bfv
\end{equation}
where \begin{math}n_{L}\cdots n_{0}\end{math} is the \begin{math}q\end{math}-ary expansion of \begin{math}n\end{math}.

The notion of \begin{math}q\end{math}-regular functions is a generalisation of
\begin{math}q\end{math}-additive and \begin{math}q\end{math}-multiplicative functions. However, we emphasise that \begin{math}q\end{math}-quasiadditive and \begin{math}q\end{math}-quasimultiplicative functions are not
necessarily \begin{math}q\end{math}-regular: a \begin{math}q\end{math}-regular sequence can always be bounded
by \begin{math}O(n^{c})\end{math} for a constant \begin{math}c\end{math}, see~\cite[Thm.\
16.3.1]{Allouche-Shallit:2003:autom}. In our setting however, the values of \begin{math}f(n)\end{math} can be chosen arbitrarily for those \begin{math}n\end{math} whose \begin{math}q\end{math}-ary expansion does not contain \begin{math}0^{r}\end{math}. Therefore a \begin{math}q\end{math}-quasiadditive or -multiplicative function can grow arbitrarily fast.

We call \begin{math}(\bfu, (M_{i})_{0\leq i<q}, \bfv)\end{math} a
\emph{linear representation} of the
\begin{math}q\end{math}-regular function \begin{math}f\end{math}. Such
a linear representation is called
\emph{zero-insensitive} if \begin{math}M_{0}\bfv=\bfv\end{math}, meaning that in
\eqref{eq:q-regular}, leading zeros in the \begin{math}q\end{math}-ary expansion of \begin{math}n\end{math} do
not change anything. We call a linear representation \emph{minimal} if the dimension
of the matrices \begin{math}M_{i}\end{math} is minimal among all linear representations of \begin{math}f\end{math}.

 Following \cite{Dumas:2014:asymp}, every \begin{math}q\end{math}-regular function has a
 zero-insensitive minimal linear representation.

\subsection{When is a $q$-regular function $q$-quasimultiplicative?}
We now give a characterisation of \begin{math}q\end{math}-regular
functions that are \begin{math}q\end{math}-quasimultiplicative. Proofs
of the results in this and the following subsection can be found in
the full version.
\begin{theorem}\label{theorem:reg-mult}
  Let \begin{math}f\end{math} be a \begin{math}q\end{math}-regular
  sequence with zero-insensitive minimal linear representation
  \eqref{eq:q-regular}. Then the following
  two assertions are equivalent:
  \begin{itemize}
  \item The sequence \begin{math}f\end{math} is \begin{math}q\end{math}-quasimultiplicative with parameter
    \begin{math}r\end{math}.
    \item    \begin{math}M_{0}^{r}=\bfv\bfu^{t}\end{math}.
  \end{itemize}
\end{theorem}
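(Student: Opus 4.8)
The plan is to rewrite the functional equation \eqref{eq:q-mult} in terms of the matrices $M_{i}$ and then to use minimality in an essential way. The starting point is an elementary consequence of iterating the recursion \eqref{eq:q-regular-recursive}: if $0\le b<q^{k}$ and we write $b$ with exactly $k$ base-$q$ digits $b_{k-1}\cdots b_{0}$ (padding on the left with zeros, which changes nothing since the representation is zero-insensitive, $M_{0}\bfv=\bfv$), then for every nonnegative integer $a$
\begin{equation*}
  \bff(q^{k+r}a+b)=N_{b}\,M_{0}^{\,r}\,\bff(a),\qquad N_{b}:=\prod_{i=0}^{k-1}M_{b_{i}},
\end{equation*}
where $N_{b}$ is the product of the matrices read off from the $k$ digits of $b$ in the order prescribed by \eqref{eq:q-regular}, and the block of $r$ zeros that separates the expansion of $a$ from that of $b$ contributes exactly the factor $M_{0}^{\,r}$. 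Taking $a=0$ in the same computation gives $\bff(b)=N_{b}\bfv$, so that $f(b)=\bfu^{t}N_{b}\bfv$ and $f(q^{k+r}a+b)=\bfu^{t}N_{b}\,M_{0}^{\,r}\,\bff(a)$.

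If $M_{0}^{\,r}=\bfv\bfu^{t}$, the first implication follows immediately (minimality is not even needed here): since $\bfu^{t}N_{b}\bfv$ is a scalar,
\begin{equation*}
  f(q^{k+r}a+b)=\bfu^{t}N_{b}\,\bfv\bfu^{t}\,\bff(a)=\bigl(\bfu^{t}N_{b}\bfv\bigr)\bigl(\bfu^{t}\bff(a)\bigr)=f(b)\,f(a),
\end{equation*}
which is precisely $q$-quasimultiplicativity with parameter $r$.

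For the converse, assume $f(q^{k+r}a+b)=f(a)f(b)$ whenever $0\le b<q^{k}$. Substituting the expressions above and using that the scalar $\bfu^{t}\bff(a)$ commutes, this becomes
\begin{equation*}
  \bfu^{t}N_{b}\,M_{0}^{\,r}\,\bff(a)=\bigl(\bfu^{t}\bff(a)\bigr)\bigl(\bfu^{t}N_{b}\bfv\bigr)=\bfu^{t}N_{b}\,\bfv\bfu^{t}\,\bff(a),
\end{equation*}
i.e.\ $\bfu^{t}N_{b}\,(M_{0}^{\,r}-\bfv\bfu^{t})\,\bff(a)=0$ for all nonnegative integers $a$ and $b$ (choosing $k$ to be the number of digits of $b$, or any larger value). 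To conclude $M_{0}^{\,r}=\bfv\bfu^{t}$ I would invoke the minimality of the linear representation through its reachability and observability: the column vectors $\bff(a)$, as $a$ ranges over all nonnegative integers, span the whole space, and the row vectors $\bfu^{t}N_{b}$, as $b$ ranges over all nonnegative integers (equivalently, since we may pad, as the underlying digit strings range over all finite words), span the whole dual space. Consequently the bilinear form $(\bfx,\bfy)\mapsto\bfx^{t}(M_{0}^{\,r}-\bfv\bfu^{t})\bfy$ vanishes on a pair of spanning sets, hence identically, which forces $M_{0}^{\,r}=\bfv\bfu^{t}$.

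The main obstacle is this last step: one has to justify that a minimal linear representation is both reachable and observable. I would either quote this from the theory of $q$-regular sequences and recognisable series (cf.\ \cite{Allouche-Shallit:2003:autom}) or establish it directly by the usual quotient argument — if the reachable column vectors spanned only a proper subspace, that subspace would be invariant under all $M_{i}$ and contain $\bfv$, yielding a linear representation of strictly smaller dimension and contradicting minimality, and dually (passing to the quotient) for the row vectors and observability. A secondary point that needs care is the bookkeeping with leading zeros: padding $b$ to a full block of $k$ digits is what makes the strings $(b_{0},\dots,b_{k-1})$ run over all words of length $k$ as $b$ runs over $\{0,1,\dots,q^{k}-1\}$, and zero-insensitivity is precisely what guarantees $\bff(b)=N_{b}\bfv$ regardless of this padding, so that the two spanning statements can be applied.
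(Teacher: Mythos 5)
Your proof is correct and follows what is essentially the argument of the paper (whose extended abstract defers this proof to the full version): translate the functional equation into $\bfu^{t}N_{b}\,(M_{0}^{r}-\bfv\bfu^{t})\,\bff(a)=0$, then use that minimality forces reachability and observability, so the vectors $\bff(a)$ and the covectors $\bfu^{t}N_{b}$ span their respective spaces and the middle matrix must vanish, while the converse direction is an immediate computation needing no minimality. Your identification of the reachability/observability property of minimal linear representations as the crux, together with the restriction/quotient argument you sketch for it (which works for minimality over \emph{all} linear representations, as the paper defines it), is exactly the right justification.
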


\begin{example}[The number of optimal \begin{math}\{0,1,-1\}\end{math}-representations]
  The number of optimal \begin{math}\{0,1,-1\}\end{math}-repre\-sentations as described in
  Section~\ref{sec:exampl-q-quasiadd} is a \begin{math}2\end{math}-regular sequence by
  Lemma~\ref{lemma:opt-representations-recursion}. A
  minimal zero-insensitive linear representation for the vector \begin{math}(u_{1}(n),
  u_{2}(n), u_{3}(n), u_{1}(n+1), u_{4}(n+1),
  u_{5}(n+1))^{t}\end{math} is given by
  \begin{equation*}
    M_{0}=
    \begin{pmatrix}
      1&0&0&0&0&0\\
      1&0&0&0&0&0\\
      0&1&0&0&0&0\\
      0&1&0&0&1&0\\
      0&0&0&0&0&1\\
      0&0&0&0&0&0
    \end{pmatrix},\quad
    M_{1}=
    \begin{pmatrix}
      0&1&0&0&1&0\\
      0&0&1&0&0&0\\
      0&0&0&0&0&0\\
      0&0&0&1&0&0\\
      0&0&0&1&0&0\\
      0&0&0&0&1&0
    \end{pmatrix},
  \end{equation*}
\begin{math}\bfu^{t}=(1,0,0,0,0,0)\end{math} and \begin{math}\bfv=(1,1,1,1,0,0)^{t}\end{math}.

As \begin{math}M_{0}^{3}=vu^{t}\end{math}, this sequence is \begin{math}2\end{math}-quasimultiplicative with
parameter \begin{math}3\end{math}, which is the same result as in Lemma~\ref{lem:optrep}. 
\end{example}

\begin{remark}
  The condition on the minimality of the linear representation in
  Theorem~\ref{theorem:reg-mult} is necessary as illustrated by the
  following example:
  
  Consider the sequence \begin{math}f(n)=2^{s_{2}(n)}\end{math} where \begin{math}s_{2}(n)\end{math} is the binary sum of digits
  function. This sequence is \begin{math}2\end{math}-regular and
  \begin{math}2\end{math}-(quasi-)multiplicative with parameter \begin{math}r=0\end{math}. A minimal
 linear representation is given by \begin{math}M_{0}=1\end{math}, \begin{math}M_{1}=2\end{math}, \begin{math}v=1\end{math} and \begin{math}u=1\end{math}. As stated
  in Theorem~\ref{theorem:reg-mult}, we have \begin{math}M_{0}^{0}=vu^{t}=1\end{math}.

  If we use the zero-insensitive non-minimal linear representation defined by \begin{math}M_{0}=\big(
  \begin{smallmatrix}
    1&13\\0&2
  \end{smallmatrix}\big)
\end{math}, \begin{math}M_{1}=\big(
\begin{smallmatrix}
  2&27\\0&5
\end{smallmatrix}
\big)\end{math}, \begin{math}v=(1, 0)^{t}\end{math} and \begin{math}u^{t}=(1, 0)\end{math} instead, we have \begin{math}\rank M_{0}^{r}=2\end{math}
for all \begin{math}r\geq 0\end{math}. Thus \begin{math}M_{0}^{r}\neq vu^{t}\end{math}.
\end{remark}

\subsection{When is a $q$-regular function $q$-quasiadditive?}

The characterisation of \begin{math}q\end{math}-regular functions that are also
\begin{math}q\end{math}-quasiadditive is somewhat more complicated. Again, we consider a
zero-insensitive (but not necessarily minimal) linear representation. We let \begin{math}U\end{math} be the smallest
vector space such that all vectors of the form \begin{math}\bfu^{t}\prod_{i\in I}
M_{n_{i}}\end{math} lie in the affine subspace \begin{math}\bfu^{t} + U^t\end{math} (\begin{math}U^t\end{math} is used
as a shorthand for \begin{math}\{\bfx^{t} \,:\, \bfx \in U\}\end{math}). Such a vector
space must exist, since \begin{math}\bfu^{t}\end{math} is a vector of this form
(corresponding to the empty product, where \begin{math}I = \emptyset\end{math}). Likewise,
let \begin{math}V\end{math} be the smallest vector space such that all vectors of the form
\begin{math}\prod_{j\in J}M_{n_{j}}\bfv\end{math} lie in the affine subspace \begin{math}\bfv +
V\end{math}.

\begin{theorem}\label{thm:q-reg-q-quasiadd}
  Let \begin{math}f\end{math} be a \begin{math}q\end{math}-regular
  sequence with zero-insensitive linear representation
  \eqref{eq:q-regular}. The sequence \begin{math}f\end{math} is \begin{math}q\end{math}-quasiadditive with parameter \begin{math}r\end{math} if and only if all of the following statements hold:
\begin{itemize}
\item \begin{math}\bfu^t \bfv = 0\end{math},
\item \begin{math}U^t\end{math} is orthogonal to \begin{math}(M_0^r - I)\bfv\end{math}, i.e., \begin{math}\bfx^t(M_0^r - I)\bfv = \bfx^tM_0^r\bfv - \bfx^t\bfv = 0\end{math} for all \begin{math}\bfx \in U\end{math},
\item \begin{math}V\end{math} is orthogonal to \begin{math}\bfu^t(M_0^r - I)\end{math}, i.e., \begin{math}\bfu^t(M_0^r - I)\bfy = \bfu^tM_0^r\bfy - \bfu^t\bfy = 0\end{math} for all \begin{math}\bfy \in V\end{math},
\item \begin{math}U^t M_0^r V = 0\end{math}, i.e., \begin{math}\bfx^t M_0^r \bfy  = 0\end{math} for all \begin{math}\bfx \in U\end{math} and \begin{math}\bfy \in V\end{math}.
\end{itemize}
\end{theorem}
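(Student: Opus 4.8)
The plan is to unwind the linear representation along the concatenated $q$-ary expansion and reduce the functional equation~\eqref{eq:q-add} to a finite system of (bi)linear identities. Call a triple $(a,b,k)$ of nonnegative integers \emph{admissible} if $b<q^{k}$, and set $n=q^{k+r}a+b$. Reading digits from the least significant end, the $q$-ary expansion of $n$ is the $k$-digit (zero-padded) expansion of $b$, then $r$ zeros, then the expansion of $a$. Writing $P_{b}$ for the product of the $k$ matrices indexed by the digits of $b$ and $P_{a}$ for the product of the matrices indexed by the digits of $a$, formula~\eqref{eq:q-regular} becomes $f(n)=\bfu^{t}P_{b}M_{0}^{r}P_{a}\bfv$. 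By the definitions of $U$ and $V$ we have $\bfu^{t}P_{b}\in\bfu^{t}+U^{t}$ and $P_{a}\bfv\in\bfv+V$, so we may write $\bfu^{t}P_{b}=\bfu^{t}+\bfx^{t}$ with $\bfx\in U$ and $P_{a}\bfv=\bfv+\bfy$ with $\bfy\in V$. Zero-insensitivity gives $M_{0}^{j}\bfv=\bfv$ for all $j\geq0$; in particular $f(b)=\bfu^{t}P_{b}\bfv=(\bfu^{t}+\bfx^{t})\bfv$ (the zero padding of $b$ is absorbed) and $f(a)=\bfu^{t}P_{a}\bfv=\bfu^{t}(\bfv+\bfy)$. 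Expanding $f(n)=(\bfu^{t}+\bfx^{t})M_{0}^{r}(\bfv+\bfy)$ and collecting terms with the help of $M_{0}^{r}\bfv=\bfv$ yields
\begin{equation*}
  f(q^{k+r}a+b)-f(a)-f(b)=\bfx^{t}M_{0}^{r}\bfy+\bfu^{t}(M_{0}^{r}-I)\bfy-\bfu^{t}\bfv .
\end{equation*}

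From here the ``if'' direction is immediate. The second of the four conditions, $\bfx^{t}(M_{0}^{r}-I)\bfv=0$ for $\bfx\in U$, holds automatically, since zero-insensitivity forces $(M_{0}^{r}-I)\bfv=0$; and if the other three conditions hold then, for every $\bfx\in U$ and $\bfy\in V$, the three summands on the right above vanish (the first by $U^{t}M_{0}^{r}V=0$, the second by orthogonality of $V$ to $\bfu^{t}(M_{0}^{r}-I)$, the third by $\bfu^{t}\bfv=0$). Hence $f(q^{k+r}a+b)=f(a)+f(b)$ for every admissible triple, i.e.\ $f$ is $q$-quasiadditive with parameter $r$.

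For the ``only if'' direction, assume $f$ is $q$-quasiadditive with parameter $r$, so the displayed difference vanishes for every admissible triple. The key point is a realisability statement: as $(a,b,k)$ ranges over admissible triples, $\bfx$ ranges over a generating set of $U$ and $\bfy$ over a generating set of $V$. Indeed every finite product $M_{c_{1}}\cdots M_{c_{s}}$ arises as $P_{b}$ for a suitable $b$ with $k=s$ (and as $P_{a}$ for a suitable $a$), so the vectors $\bfu^{t}M_{c_{1}}\cdots M_{c_{s}}-\bfu^{t}$, which by definition generate $U^{t}$, all occur as such $\bfx$, and similarly for $\bfy$ and $V$; here it is essential that $k$ is a free parameter. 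Now take $a=k=0$: then $\bfx=\bfy=0$ and the identity gives $\bfu^{t}\bfv=0$. Next take $k=0$ (so $b=0$, $P_{b}=I$, $\bfx=0$) and let $a$ vary: using $\bfu^{t}\bfv=0$, the difference becomes the linear form $\bfu^{t}(M_{0}^{r}-I)\bfy$, which therefore vanishes on a generating set of $V$, hence on all of $V$. Feeding these two facts back, the difference collapses to the bilinear form $\bfx^{t}M_{0}^{r}\bfy$, which vanishes on a generating set of $U^{t}\times V$ and hence on $U^{t}\times V$. Together with the automatic second condition, all four statements follow.

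The digit bookkeeping -- in particular the degenerate cases $a=0$ or $b=0$, where the relevant digit block is empty -- and the linear and bilinear extension steps are routine. The one place that calls for a little care is the realisability claim in the ``only if'' direction: one has to make sure that the concrete vectors $\bfu^{t}P_{b}$ and $P_{a}\bfv$ coming from admissible triples, rather than arbitrary matrix products, already span the affine spaces $\bfu^{t}+U^{t}$ and $\bfv+V$. This is exactly where the freedom of the parameter $k$ is used (together with zero-insensitivity, which lets us disregard the high-order zero padding of $b$), and it is the main thing to get right.
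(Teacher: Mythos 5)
Your proof is correct and follows essentially the route the theorem's structure dictates (the paper defers its proof to the full version, but the four conditions are precisely the four (bi)linear forms your expansion produces): write $f(q^{k+r}a+b)=\bfu^{t}P_{b}M_{0}^{r}P_{a}\bfv$, use the affine decompositions $\bfu^{t}P_{b}\in\bfu^{t}+U^{t}$ and $P_{a}\bfv\in\bfv+V$ to split the defect $f(q^{k+r}a+b)-f(a)-f(b)$ into the listed terms, and for the converse use that the realisable $\bfx$ and $\bfy$ independently span $U$ and $V$ (with $k$ free and zero-insensitivity absorbing padding). Your side remark that the second condition is automatic under zero-insensitivity, since $M_{0}\bfv=\bfv$ forces $(M_{0}^{r}-I)\bfv=0$, is also correct.
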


\begin{example}
  For the Hamming weight of the nonadjacent form, a zero-insensitive
  (and also minimal) linear representation for the vector \begin{math}(\hn(n),\hn(n+1),\hn(2n+1),1)^{t}\end{math} is
  \begin{equation*}
    M_{0}=
    \begin{pmatrix}
      1&0&0&0\\0&0&1&0\\1&0&0&1\\0&0&0&1
    \end{pmatrix},\quad
    M_{1}=
    \begin{pmatrix}
      0&0&1&0\\0&1&0&0\\0&1&0&1\\0&0&0&1
    \end{pmatrix},
  \end{equation*}
\begin{math}\bfu^{t}=(1,0,0,0)\end{math} and \begin{math}\bfv=(0,1,1,1)^{t}\end{math}.

The three vectors \begin{math}\mathbf{w}_1 = \bfu^{t}M_{1}-\bfu^{t}\end{math},
\begin{math}\mathbf{w}_2 = \bfu^{t}M_{1}^{2}-\bfu^{t}\end{math} and
\begin{math}\mathbf{w}_3 =  \bfu^{t}M_{1}M_{0}M_{1}-\bfu^{t}\end{math} are linearly
independent. If we let \begin{math}W\end{math} be the vector space spanned by those three, it is easily verified that \begin{math}M_{0}\end{math} and \begin{math}M_{1}\end{math} 
map the affine subspace \begin{math}\bfu^{t}+ W^t\end{math} to itself, so \begin{math}U=W\end{math} is spanned by these vectors.

Similarly, the three vectors \begin{math}M_{1}\bfv-\bfv\end{math},
\begin{math}M_{1}^{2}\bfv-\bfv\end{math} and
\begin{math}M_{1}M_{0}M_{1}\bfv-\bfv\end{math} span \begin{math}V\end{math}.

The first condition of Theorem~\ref{thm:q-reg-q-quasiadd} is obviously
true. We only have to verify the other three conditions with \begin{math}r=2\end{math} for the basis vectors
of \begin{math}U\end{math} and \begin{math}V\end{math}, which is done easily. Thus \begin{math}\hn\end{math} is a \begin{math}2\end{math}-regular
sequence that is also \begin{math}2\end{math}-quasiadditive, as was also proved in Section~\ref{sec:exampl-q-quasiadd}.
\end{example}

Finding the vector spaces \begin{math}U\end{math}
and \begin{math}V\end{math} is not trivial. But in a certain special
case of \begin{math}q\end{math}-regular functions, we can give a sufficient condition for
\begin{math}q\end{math}-additivity, which is easier to check. These \begin{math}q\end{math}-regular functions are output sums of
transducers as defined
in~\cite{Heuberger-Kropf-Prodinger:2015:output}: a transducer
transforms the \begin{math}q\end{math}-ary expansion of an integer \begin{math}n\end{math} (read from the least
significant to the most significant digit) deterministically into an output
sequence and leads to a state \begin{math}s\end{math}. The output sum is then the sum of this output sequence
together with the final output of the state \begin{math}s\end{math}. This defines the
value of the
\begin{math}q\end{math}-regular function evaluated at \begin{math}n\end{math}. The function \begin{math}\hn\end{math} discussed in the example above, as well as many other examples, can be represented in this way.

\begin{prop}\label{proposition:q-add-transducer}
  The output sum of a connected transducer is \begin{math}q\end{math}-additive with parameter \begin{math}r\end{math} if the following
  conditions are satisfied:
  \begin{itemize}
  \item The transducer has the reset sequence \begin{math}0^{r}\end{math} going to the
    initial state, i.e., reading
    \begin{math}r\end{math} zeros always leads to the initial state of the transducer.
  \item For every state, the output sum along the path of the reset
    sequence \begin{math}0^{r}\end{math} equals the final output of this state.
  \item Additional zeros at the end of the input sequence do not
    change the output sum.
  \end{itemize}
\end{prop}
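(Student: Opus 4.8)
The plan is to argue directly with the transducer rather than through the matrix formalism. Write $s_0$ for the initial state; for a state $t$ and a finite digit string $w$ (processed from the least to the most significant digit) let $S_t(w)$ be the sum of the outputs emitted when $w$ is read starting from $t$, let $\delta_t(w)$ be the state reached afterwards, and let $g(t)$ be the final output of $t$. Then $f(n) = S_{s_0}(w_n) + g(\delta_{s_0}(w_n))$, where $w_n$ is the $q$-ary expansion of $n$, and one has the composition identities $S_t(ww') = S_t(w) + S_{\delta_t(w)}(w')$ and $\delta_t(ww') = \delta_{\delta_t(w)}(w')$. In this language the three hypotheses read: $\delta_t(0^r) = s_0$ for every state $t$ (and hence $\delta_t(0^m) = s_0$ for every $m \ge r$, by splitting off the last $r$ of the zeros); $S_t(0^r) = g(t)$ for every $t$; and $S_{s_0}(w0^j) + g(\delta_{s_0}(w0^j)) = S_{s_0}(w) + g(\delta_{s_0}(w))$ for every string $w$ and every $j \ge 0$. (Connectedness is used only to guarantee that every state is reachable, so that the first two conditions concern states that actually occur.)

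Next I would record two consequences. Applying the third hypothesis with $w$ empty and $j = r$ and combining it with the other two yields first $S_{s_0}(0^r) = 0$ and then $g(s_0) = 0$, so in particular $f(0) = 0$. The crucial step is then the following: for any integer $b \ge 0$ with expansion $w_b$, setting $t := \delta_{s_0}(w_b)$, one has $S_t(0^m) = g(t)$ for every $m \ge r$. To prove it, compute $f(b)$ twice — once directly as $f(b) = S_{s_0}(w_b) + g(t)$, and once after padding $w_b$ with $m$ leading zeros, which by the third hypothesis leaves the output sum unchanged: $f(b) = S_{s_0}(w_b) + S_t(0^m) + g(\delta_t(0^m)) = S_{s_0}(w_b) + S_t(0^m)$, where we used $\delta_t(0^m) = s_0$ and $g(s_0) = 0$. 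Comparing the two expressions gives $S_t(0^m) = g(t)$. Informally, this says that reading a run of at least $r$ zeros ``cashes out'' whatever has been read so far into its final output and resets the machine to $s_0$.

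The main identity now follows. Let $n = q^{k+r}a + b$ with $0 \le b < q^k$. If $a = 0$ there is nothing to prove, since $f(n) = f(b) = f(a) + f(b)$ because $f(0) = 0$. If $a > 0$, the $q$-ary expansion of $n$ is the concatenation $w_n = w_b\,0^m\,w_a$, where $w_a$ and $w_b$ are the expansions of $a$ and $b$ and $m = k + r - |w_b| \ge r$ (because $|w_b| \le k$); note that $w_n$ has no leading zeros, as $a > 0$. Using the composition identities, the identity $\delta_{s_0}(w_b 0^m) = s_0$, the crucial step $S_t(0^m) = g(t)$ with $t = \delta_{s_0}(w_b)$, and $f(b) = S_{s_0}(w_b) + g(t)$, one obtains
$$f(n) = S_{s_0}(w_b) + S_t(0^m) + S_{s_0}(w_a) + g(\delta_{s_0}(w_a)) = \bigl(S_{s_0}(w_b) + g(t)\bigr) + \bigl(S_{s_0}(w_a) + g(\delta_{s_0}(w_a))\bigr) = f(b) + f(a),$$
which is exactly \eqref{eq:q-add} with the prescribed value of $r$. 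There is no genuinely hard computation here; the one point that needs care is that the separating block of zeros always has length $m \ge r$, no matter how many significant digits $b$ actually has, and that the surplus zeros $0^{m-r}$ must not contribute a residual term. This is precisely why the crucial step is phrased for padding of an arbitrary length $m \ge r$ rather than exactly $r$: the naive decomposition $0^m = 0^{m-r}0^r$ would leave the term $S_{s_0}(0^{m-r})$, which need not vanish when $m - r < r$.
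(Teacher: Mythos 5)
Your argument is correct: the decomposition of the run on $w_b\,0^m\,w_a$, together with the observation that $S_t(0^m)=g(t)$ for all $m\ge r$ (derived from the three hypotheses, including the case $g(s_0)=0$), is exactly the direct transducer computation that establishes \eqref{eq:q-add}, and you rightly flag the only delicate point, namely that the separating zero block has length $m\ge r$ rather than exactly $r$. The paper defers its proof to the full version, but this is essentially the same approach, so there is nothing to add.
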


\section{A central limit theorem for $q$-quasiadditive and -multiplicative functions}
In this section, we prove a central limit theorem for
\begin{math}q\end{math}-quasimultiplicative functions taking only positive values.
By Proposition~\ref{prop:trivial}, this also implies a central
limit theorem for \begin{math}q\end{math}-quasiadditive functions.

To this end, we define a generating function: let \begin{math}f\end{math} be a \begin{math}q\end{math}-quasimultiplicative function with positive values, let \begin{math}\M_k\end{math} be the set of all nonnegative integers less than \begin{math}q^k\end{math} (i.e., those positive integers whose \begin{math}q\end{math}-ary expansion needs at most \begin{math}k\end{math} digits), and set
\begin{equation*}F(x,t) = \sum_{k \geq 0} x^k \sum_{n \in \M_k} f(n)^t.\end{equation*}
The decomposition of Proposition~\ref{prop:split} now translates
directly to an alternative representation for \begin{math}F(x,t)\end{math}: let \begin{math}\B\end{math} be
the set of all positive integers not divisible by \begin{math}q\end{math} whose \begin{math}q\end{math}-ary representation does not contain the block \begin{math}0^{r}\end{math}, let \begin{math}\ell(n)\end{math} denote the length of the \begin{math}q\end{math}-ary representation of \begin{math}n\end{math}, and define the function \begin{math}B(x,t)\end{math} by
\begin{equation*}B(x,t) = \sum_{n \in \B} x^{\ell(n)} f(n)^t.\end{equation*}
We remark that in the special case where \begin{math}q=2\end{math} and \begin{math}r=1\end{math}, this simplifies greatly to
\begin{equation}\label{eq:q2_r1}
B(x,t) = \sum_{k \geq 1} x^{k} f(2^k-1)^t.
\end{equation}

\begin{prop}\label{prop:gf}
The generating function \begin{math}F(x,t)\end{math} can be expressed as
\begin{equation*}F(x,t) = \frac{1}{1-x} \cdot \frac{1}{1 - \frac{x^r}{1-x} B(x,t)} \Big( 1 + (1+x+\cdots+x^{r-1})B(x,t) \Big) = \frac{1+(1+x+\cdots+x^{r-1})B(x,t)}{1-x-x^rB(x,t)}.\end{equation*}
\end{prop}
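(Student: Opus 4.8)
The plan is to reduce $F(x,t)$ to a length generating function, expand the latter via the block decomposition of Proposition~\ref{prop:split}, and finish with a short algebraic manipulation. (I assume $r\ge 1$; the case $r=0$, i.e.\ strong $q$-multiplicativity, is handled the same way upon reading $\B=\{1,\dots,q-1\}$.) Writing $\ell(n)$ for the length of the $q$-ary expansion of $n$, the first step is to observe that
\begin{equation*}
F(x,t)=\frac{1}{1-x}\bigl(1+A(x,t)\bigr),\qquad A(x,t):=\sum_{n\ge 1}x^{\ell(n)}f(n)^t .
\end{equation*}
This is just a reordering of the double sum defining $F$: the integer $0$ lies in $\M_k$ for every $k\ge 0$ and contributes $f(0)^t=1$ each time, accounting for $\tfrac{1}{1-x}$, while a positive integer $n$ lies in $\M_k$ precisely for $k\ge\ell(n)$ and hence contributes $x^{\ell(n)}f(n)^t/(1-x)$.

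The combinatorial heart is a decomposition statement: every positive integer $n$ has a unique $q$-ary expansion of the form
\begin{equation*}
m_1\,\underbrace{0\cdots0}_{z_1}\,m_2\,\underbrace{0\cdots0}_{z_2}\cdots m_\ell\,\underbrace{0\cdots0}_{z_\ell}
\end{equation*}
with $\ell\ge 1$, each $m_i\in\B$, $z_i\ge r$ for $1\le i\le\ell-1$, and $z_\ell\ge 0$ arbitrary. This is exactly the decomposition of Proposition~\ref{prop:split} together with the extra bookkeeping of the trailing zeros of $n$: the splitting procedure there is deterministic, its blocks $n_i$ are the $q$-ary representations of $m_iq^{z_i}$, each $m_i$ is not divisible by $q$ and — being what remains strictly before a maximal zero-run — contains no run of $r$ zeros, so $m_i\in\B$, and each separating run has length $\ge r$ except possibly the last. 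Conversely, since no $m_i$ contains $0^r$ and every $m_i$ begins and ends with a nonzero digit, the runs $0^{z_1},\dots,0^{z_{\ell-1}}$ are precisely the maximal zero-runs of length $\ge r$ occurring strictly inside the assembled expansion, so applying the splitting procedure to it recovers the tuple; this gives the claimed bijection. By Proposition~\ref{prop:split} one then has $f(n)=f(m_1)\cdots f(m_\ell)$, and plainly $\ell(n)=\sum_{i=1}^{\ell}\bigl(\ell(m_i)+z_i\bigr)$.

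Summing over positive $n$ through this bijection factors $A(x,t)$ into independent geometric sums:
\begin{equation*}
A(x,t)=\sum_{\ell\ge 1}\Bigl(\sum_{m\in\B}x^{\ell(m)}f(m)^t\Bigr)^{\!\ell}\Bigl(\sum_{z\ge r}x^{z}\Bigr)^{\!\ell-1}\Bigl(\sum_{z\ge 0}x^{z}\Bigr)=\sum_{\ell\ge 1}B(x,t)^\ell\Bigl(\frac{x^r}{1-x}\Bigr)^{\!\ell-1}\frac{1}{1-x},
\end{equation*}
a legitimate manipulation of formal power series in $x$ since each factor $B(x,t)$ has $x$-order at least $1$. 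Summing the geometric series in $\ell$ gives $A(x,t)=\dfrac{B(x,t)}{1-x-x^rB(x,t)}$, and then, using $(1-x)(1+x+\cdots+x^{r-1})=1-x^r$,
\begin{equation*}
F(x,t)=\frac{1}{1-x}\Bigl(1+\frac{B(x,t)}{1-x-x^rB(x,t)}\Bigr)=\frac{1-x+(1-x^r)B(x,t)}{(1-x)\bigl(1-x-x^rB(x,t)\bigr)}=\frac{1+(1+x+\cdots+x^{r-1})B(x,t)}{1-x-x^rB(x,t)},
\end{equation*}
which is the asserted identity (the first expression in the statement is merely a regrouping of this).

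Everything hinges on getting the decomposition step exactly right, and the point that must not be missed is the asymmetry between the internal separating zero-runs, whose length is forced to be $\ge r$, and the \emph{trailing} zero-run, whose length is completely unconstrained; erroneously capping the trailing run at $r-1$ changes the generating function. Verifying that the correspondence is a genuine bijection — surjectivity in particular relies on $m_i\in\B$ so that the inserted zero-blocks are exactly the long maximal runs — is where the care is needed; the remaining steps are routine.
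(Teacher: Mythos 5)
Your proof is correct and takes essentially the same route as the paper: a sequence decomposition of the $q$-ary expansion into $\B$-blocks separated by runs of at least $r$ zeros, with Proposition~\ref{prop:split} supplying the multiplicativity of $f$ across the blocks, followed by geometric summation. The only cosmetic difference is that the paper decomposes the zero-padded length-$k$ word directly (leading zeros, then blocks of the form ``$\B$-element followed by at least $r$ zeros'', then a short tail), which yields the first product form verbatim, whereas you decompose the unpadded expansion with an unconstrained trailing zero-run and account for the padding via the factor $x^{\ell(n)}/(1-x)$; the resulting rational expression is the same.
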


\begin{proof}
The first factor stands for the initial sequence of leading zeros, the
second factor for a (possibly empty) sequence of blocks consisting of
an element of \begin{math}\B\end{math} and \begin{math}r\end{math} or more zeros, and the last factor for the
final part, which may be empty or an element of \begin{math}\B\end{math} with up to \begin{math}r-1\end{math} zeros (possibly none) added at the end.
\end{proof}

Under suitable assumptions on the growth of
a \begin{math}q\end{math}-quasiadditive
or \begin{math}q\end{math}-quasimultiplicative function, we can
exploit the expression of Proposition~\ref{prop:gf} to prove a central
limit theorem in the following steps (full proofs can again be found
in the full version).

\begin{defi}
  We say that a function \begin{math}f\end{math} has \emph{at most polynomial growth} if
  \begin{math}f(n)=O(n^{c})\end{math} and \begin{math}f(n) = \Omega(n^{-c})\end{math} for a fixed \begin{math}c\geq
  0\end{math}. We say that \begin{math}f\end{math} has \emph{at most logarithmic growth} if
  \begin{math}f(n)=O(\log n)\end{math}.
\end{defi}

Note that our definition of at most polynomial growth is slightly
different than usual: the extra condition \begin{math}f(n) =
  \Omega(n^{-c})\end{math} ensures that the absolute value
of \begin{math}\log f(n)\end{math} does not grow too fast.

\begin{lemma}\label{lemma:singularity} Assume that the positive, \begin{math}q\end{math}-quasimultiplicative function \begin{math}f\end{math} has at most polynomial growth. 

There exist positive constants \begin{math}\delta\end{math} and \begin{math}\epsilon\end{math} such that
\begin{itemize}
\item \begin{math}B(x,t)\end{math} has radius of convergence \begin{math}\rho(t) > \frac1q\end{math} whenever \begin{math}|t| \leq \delta\end{math}.
\item For \begin{math}|t| \leq \delta\end{math}, the equation \begin{math}x + x^r B(x,t) = 1\end{math} has a complex solution \begin{math}\alpha(t)\end{math} with \begin{math}|\alpha(t)| < \rho(t)\end{math} and no other solutions with modulus \begin{math}\leq (1+\epsilon)|\alpha(t)|\end{math}. 
\item Thus the generating function \begin{math}F(x,t)\end{math} has a simple pole at \begin{math}\alpha(t)\end{math} and no further singularities of modulus \begin{math}\leq (1+ \epsilon)|\alpha(t)|\end{math}. 
\item Finally, \begin{math}\alpha\end{math} is an analytic function of \begin{math}t\end{math} for \begin{math}|t| \leq \delta\end{math}.
\end{itemize}
\end{lemma}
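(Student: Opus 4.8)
The plan is to read off all four assertions from the closed form
\[
F(x,t)=\frac{1+(1+x+\cdots+x^{r-1})B(x,t)}{1-x-x^{r}B(x,t)}
\]
of Proposition~\ref{prop:gf}, so that the singularities of $F(\cdot,t)$ sit either at the dominant singularity of $B(\cdot,t)$ or at a zero of the denominator $1-x-x^{r}B(x,t)$. Writing $\phi(x,t)=x+x^{r}B(x,t)$, the equation in the statement reads $\phi(x,t)=1$, and I would proceed in three stages: (i) a lower bound for the radius of convergence of $B(\cdot,t)$; (ii) an analysis of the relevant zero of $\phi(x,0)-1$, showing it is simple and isolated; (iii) an analytic perturbation argument in $t$ based on Rouch\'e's theorem and the implicit function theorem.

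For stage (i), polynomial growth of $f$ gives $|\log f(n)|=O(\log n)=O(\ell(n))$, where $\ell(n)$ is the length of the $q$-ary expansion, so $|f(n)^{t}|\le(e^{c|t|})^{\ell(n)}$ for a suitable constant $c$. Since $B(\cdot,0)$ has nonnegative coefficients, it follows that $|B(x,t)|\le B\bigl(|x|e^{c|t|},0\bigr)$. Now $B(y,0)=\sum_{n\in\B}y^{\ell(n)}$, and the number of $n\in\B$ with $\ell(n)=k$ is at most the number of $q$-ary words of length $k$ avoiding the block $0^{r}$; cutting such a word into $\lfloor k/r\rfloor$ consecutive length-$r$ blocks, none of which equals $0^{r}$, shows this count is $O\bigl((q^{r}-1)^{k/r}\bigr)$. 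As $(q^{r}-1)^{1/r}<q$, the radius of convergence $\rho_{0}$ of $B(\cdot,0)$ satisfies $\rho_{0}\ge(q^{r}-1)^{-1/r}>1/q$, hence $B(\cdot,t)$ has radius of convergence $\rho(t)\ge\rho_{0}e^{-c|t|}$, which exceeds $1/q$ once $|t|$ is small.

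For stage (ii), at $t=0$ we have $f(n)^{0}=1$, so $F(x,0)=\sum_{k\ge0}x^{k}|\M_{k}|=\sum_{k\ge0}q^{k}x^{k}=1/(1-qx)$; thus $1-x-x^{r}B(x,0)$ vanishes at $x=1/q$, i.e.\ $\phi(1/q,0)=1$, and we set $\alpha(0)=1/q$. The series $\phi(x,0)=x+\sum_{n\in\B}x^{r+\ell(n)}$ has nonnegative coefficients and is aperiodic (the coefficient of $x$ is nonzero), so it is strictly increasing on $[0,\rho_{0})$ and the standard estimate $|\phi(x,0)|\le\phi(|x|,0)\le\phi(1/q,0)=1$, with equality only at $x=1/q$, shows that $x=1/q$ is the unique zero of $\phi(\cdot,0)-1$ in $|x|\le1/q$. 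Moreover $\partial_{x}\phi(1/q,0)=1+rq^{1-r}B(1/q,0)+q^{-r}\partial_{x}B(1/q,0)\ge1$, so this zero is simple and therefore isolated; choosing $\epsilon>0$ small enough (and with $(1+2\epsilon)/q<\rho_{0}$) we may assume $x=1/q$ is the only zero of $\phi(\cdot,0)-1$ in $|x|\le(1+2\epsilon)/q$, so that $\phi(\cdot,0)-1$ does not vanish on the circle $|x|=(1+2\epsilon)/q$.

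For stage (iii), fix $\delta>0$ with $\rho_{0}e^{-c\delta}>(1+2\epsilon)/q$; then the double series for $\phi(x,t)$ converges absolutely and uniformly on $\{|x|\le(1+2\epsilon)/q\}\times\{|t|\le\delta\}$, so $\phi$ is jointly analytic there and $B(\cdot,t)\to B(\cdot,0)$ uniformly on $|x|=(1+2\epsilon)/q$ as $t\to0$. Rouch\'e's theorem then yields, after shrinking $\delta$, exactly one zero $\alpha(t)$ of $\phi(\cdot,t)-1$ in $|x|<(1+2\epsilon)/q$ (counted with multiplicity, hence simple), with $\alpha(0)=1/q$; continuity of $\alpha$ gives $|\alpha(t)|<(1+\epsilon/2)/q$ for $|t|\le\delta$ after a further shrink, so that $(1+\epsilon)|\alpha(t)|<(1+2\epsilon)/q<\rho(t)$, which delivers both $|\alpha(t)|<\rho(t)$ and the absence of further solutions of modulus $\le(1+\epsilon)|\alpha(t)|$. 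Applying the analytic implicit function theorem to $1-x-x^{r}B(x,t)$ at $(\alpha(t),t)$, where its $x$-derivative equals $-\partial_{x}\phi$ and hence is nonzero (near $(1/q,0)$ by the computation above, and everywhere along the branch since the zero stays simple), shows $\alpha$ is analytic in $t$. Finally the numerator $1+(1+x+\cdots+x^{r-1})B(x,t)$ is at least $1$ at $(1/q,0)$, hence nonzero at $(\alpha(t),t)$ for small $|t|$, so $F(\cdot,t)$ has a genuine simple pole at $\alpha(t)$ and no other singularity of modulus $\le(1+\epsilon)|\alpha(t)|$, since there $B(\cdot,t)$ is analytic and the denominator has no other zero. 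The main obstacle is stage (i): the whole argument rests on the strict inequality $\rho_{0}>1/q$---i.e.\ on forbidding $0^{r}$ genuinely lowering the growth rate of $\B$ below $q$---and on making the bound for $\rho(t)$ uniform on a full disk $|t|\le\delta$, so that a single pair $(\epsilon,\delta)$ serves for Rouch\'e's theorem and the implicit function theorem simultaneously.
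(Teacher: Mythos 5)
Your proof is correct and follows the approach the paper intends (the generating-function decomposition of Proposition~\ref{prop:gf} is set up exactly for this meromorphic perturbation argument): the key point, $\rho(0)\ge (q^r-1)^{-1/r}>1/q$ from the forbidden block $0^r$, together with positivity of coefficients at $t=0$ and Rouch\'e/implicit-function-theorem for small $|t|$, is precisely the standard route. The only (harmless) implicit assumption is $r\ge 1$, which one may always arrange by enlarging $r$.
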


\begin{lemma}\label{lem:sing_anal}
Assume that the positive, \begin{math}q\end{math}-quasimultiplicative function \begin{math}f\end{math} has at most polynomial growth.

With \begin{math}\delta\end{math} and \begin{math}\epsilon\end{math} as in the previous lemma, we have, uniformly in \begin{math}t\end{math},
\begin{equation*}[x^k] F(x,t) = \kappa(t) \cdot \alpha(t)^{-k} \big(1 + O((1+\epsilon)^{-k})\big)\end{equation*}
for some function \begin{math}\kappa\end{math}. Both \begin{math}\alpha\end{math} and \begin{math}\kappa\end{math} are analytic functions of \begin{math}t\end{math} for \begin{math}|t| \leq \delta\end{math}, and \begin{math}\kappa(t) \neq 0\end{math} in this region.
\end{lemma}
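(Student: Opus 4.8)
The plan is to treat $F(x,t)$ as a meromorphic function of $x$ with a single dominant simple pole and invoke the classical ``subtraction of singularities'' transfer principle, the only real work being to make all estimates uniform in $t$. First I would use Proposition~\ref{prop:gf} to write $F(x,t) = N(x,t)/D(x,t)$ with $N(x,t) = 1 + (1+x+\cdots+x^{r-1})B(x,t)$ and $D(x,t) = 1-x-x^{r}B(x,t)$, and fix $\delta,\epsilon$ as in Lemma~\ref{lemma:singularity} (shrinking $\epsilon$ if necessary so that $(1+\epsilon)|\alpha(t)| < \rho(t)$ throughout $|t| \le \delta$ --- this is consistent with, indeed implicit in, the conclusions of that lemma). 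Then for each such $t$, both $N(\cdot,t)$ and $D(\cdot,t)$ are analytic on the closed disk $|x| \le (1+\epsilon)|\alpha(t)|$, and by Lemma~\ref{lemma:singularity} the only zero of $D(\cdot,t)$ there is $\alpha(t)$, which gives a simple pole of $F$. A one-line computation using $1 + x + \cdots + x^{r-1} = (x^{r}-1)/(x-1)$ together with $\alpha(t)^{r}B(\alpha(t),t) = 1 - \alpha(t)$ (which is just $D(\alpha(t),t) = 0$; note $\alpha(t) \neq 1$, e.g.\ because $\alpha(0) = 1/q$ as one checks directly from $F(x,0) = (1-qx)^{-1}$, and $\alpha$ is continuous) gives $N(\alpha(t),t) = \alpha(t)^{-r} \neq 0$; since the pole is simple, this forces $D_{x}(\alpha(t),t) \neq 0$.

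Next I would subtract the principal part: the function $g(x,t) := F(x,t) - \gamma(t)/(x - \alpha(t))$, where $\gamma(t) := N(\alpha(t),t)/D_{x}(\alpha(t),t) = \alpha(t)^{-r}/D_{x}(\alpha(t),t)$ is the (\emph{nonzero}) residue of $F(\cdot,t)$ at $\alpha(t)$, is analytic on the whole closed disk $|x| \le (1+\epsilon)|\alpha(t)|$. Extracting coefficients from the rational term gives $[x^{k}]\bigl(\gamma(t)/(x-\alpha(t))\bigr) = \kappa(t)\alpha(t)^{-k}$ with $\kappa(t) := -\gamma(t)/\alpha(t) = -1/\bigl(\alpha(t)^{r+1}D_{x}(\alpha(t),t)\bigr)$, while Cauchy's estimate on the circle $|x| = (1+\tfrac{\epsilon}{2})|\alpha(t)|$ --- which lies strictly inside the domain of analyticity of $g(\cdot,t)$ --- bounds $\bigl|[x^{k}]g(x,t)\bigr|$ by $\bigl((1+\tfrac{\epsilon}{2})|\alpha(t)|\bigr)^{-k}$ times $\sup_{|x| = (1+\epsilon/2)|\alpha(t)|}|g(x,t)|$. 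Adding the two contributions gives the claimed expansion for each fixed $t$, since $|\kappa(t)\alpha(t)^{-k}| = |\kappa(t)|\,|\alpha(t)|^{-k}$.

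The remaining, and only genuinely delicate, point is to make all of this uniform over the compact disk $|t| \le \delta$; it reduces entirely to continuity and compactness once the pointwise statements of Lemma~\ref{lemma:singularity} are granted. I would argue that $B(x,t)$ is jointly analytic on $\{(x,t) : |t| \le \delta,\ |x| \le (1+\epsilon)|\alpha(t)|\}$ --- using $\rho(t) > 1/q$, $|\alpha(t)| < \rho(t)$ with room to spare, and the polynomial growth of $f$ --- hence so are $N$, $D$ and therefore $g$ (on the relevant circles $D$ does not vanish, so $F$ and $g$ are continuous there), so that $\sup|g|$ over those circles is finite. Likewise $|\alpha(t)|$ is bounded above and bounded away from $0$ (since $F(\cdot,t)$ is a power series, or since $D(0,t) = 1 \neq 0$); $\alpha$ is analytic in $t$ by Lemma~\ref{lemma:singularity}; and $D_{x}(\alpha(t),t) \neq 0$ on the whole compact disk, so $\kappa(t) = -1/(\alpha(t)^{r+1}D_{x}(\alpha(t),t))$ is analytic in $t$ there and bounded away from both $0$ and $\infty$. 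Combining the estimates gives $[x^{k}]F(x,t) = \kappa(t)\alpha(t)^{-k} + O\bigl((1+\tfrac{\epsilon}{2})^{-k}|\alpha(t)|^{-k}\bigr) = \kappa(t)\alpha(t)^{-k}\bigl(1 + O((1+\tfrac{\epsilon}{2})^{-k})\bigr)$ uniformly in $t$, and renaming $\epsilon/2$ as $\epsilon$ completes the proof. The main obstacle is thus precisely this uniformity bookkeeping --- securing a common disk of analyticity and uniform positive lower bounds for $|\alpha(t)|$, $|D_{x}(\alpha(t),t)|$ and hence $|\kappa(t)|$ --- rather than anything in the analytic mechanism, which is entirely standard.
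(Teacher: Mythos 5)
Your proposal is correct and follows exactly the route the paper intends (the proof itself is deferred to the full version, but Lemma~\ref{lemma:singularity} is set up precisely to feed this standard meromorphic transfer): subtract the principal part at the simple dominant pole $\alpha(t)$, apply Cauchy's bound on a slightly larger circle, and use compactness of $|t|\le\delta$ together with joint analyticity of $B$ to make everything uniform. Your computation $N(\alpha(t),t)=\alpha(t)^{-r}\neq 0$, which yields $\kappa(t)=-1/\bigl(\alpha(t)^{r+1}D_x(\alpha(t),t)\bigr)\neq 0$, is the right way to secure the nonvanishing of $\kappa$, and the uniformity bookkeeping you describe is exactly the content that needs checking.
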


\begin{theorem}\label{thm:clt-mult}
Assume that the positive, \begin{math}q\end{math}-quasimultiplicative function \begin{math}f\end{math} has at most polynomial growth.

Let \begin{math}N_k\end{math} be a randomly chosen integer in \begin{math}\{0,1,\ldots,q^k-1\}\end{math}. The
random variable \begin{math}L_k = \log f(N_k)\end{math} has mean \begin{math}\mu k + O(1)\end{math} and
variance \begin{math}\sigma^2 k + O(1)\end{math}, where the two constants are given by
\begin{equation*}\mu = \frac{B_t(1/q,0)}{q^{2r}}\end{equation*}
and
\begin{multline}
  \sigma^2= -B_{t}(1/q,0)^{2} {q}^{-4r+1}(q-1)^{-1} + 2B_{t}(1/q,0)^{2} {q}^{-3r+1}(q-1)^{-1} -B_{t}(1/q,0)^{2}{q}^{-4r}(q-1)^{-1} \\-
     4rB_{t}(1/q,0)^{2} {q}^{-4r} + B_{tt}(1/q,0){q}^{-2r}
     - 2B_{t}(1/q,0)
        B_{tx}(1/q,0)
{q}^{-4r-1} .
\end{multline}
If \begin{math}f\end{math} is not the constant function \begin{math}f \equiv 1\end{math}, then \begin{math}\sigma^2 \neq 0\end{math} and the normalised random variable \begin{math}(L_k - \mu k)/(\sigma \sqrt{k})\end{math} converges weakly to a standard Gaussian distribution.
\end{theorem}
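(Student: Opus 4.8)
The plan is to derive the central limit theorem from the singularity analysis already set up in Lemma~\ref{lemma:singularity} and Lemma~\ref{lem:sing_anal}, using the classical quasi-power framework (Hwang's theorem). First I would observe that for a random integer $N_k$ uniformly chosen in $\{0,1,\ldots,q^k-1\}$, the moment generating function of $L_k = \log f(N_k)$ is
\begin{equation*}
\E\bigl(e^{tL_k}\bigr) = \E\bigl(f(N_k)^t\bigr) = q^{-k} \sum_{n \in \M_k} f(n)^t = q^{-k} [x^k] F(x,t).
\end{equation*}
By Lemma~\ref{lem:sing_anal}, $[x^k]F(x,t) = \kappa(t)\alpha(t)^{-k}(1 + O((1+\epsilon)^{-k}))$ uniformly for $|t| \le \delta$, with $\kappa$ and $\alpha$ analytic and $\kappa(t) \ne 0$. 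Hence
\begin{equation*}
\E\bigl(e^{tL_k}\bigr) = \kappa(t)\bigl(q\,\alpha(t)\bigr)^{-k}\bigl(1 + O((1+\epsilon)^{-k})\bigr),
\end{equation*}
which is exactly a quasi-power: writing $U(t) = -\log(q\alpha(t))$ and noting $\alpha(0) = 1/q$ so $U(0)=0$, we get $\E(e^{tL_k}) = e^{kU(t)}\kappa(t)(1+O((1+\epsilon)^{-k}))$. Hwang's quasi-power theorem then gives a Gaussian limit law with mean $\mu k + O(1)$ and variance $\sigma^2 k + O(1)$, where $\mu = U'(0) = -\alpha'(0)/\alpha(0) = -q\alpha'(0)$ and $\sigma^2 = U''(0)$, provided $\sigma^2 \ne 0$.

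The next step is to compute $\alpha'(0)$ and $\alpha''(0)$ by implicit differentiation of the defining equation $G(x,t) := x + x^r B(x,t) - 1 = 0$ at $(x,t) = (1/q, 0)$. Here one uses $\alpha(0) = 1/q$, which holds because at $t=0$ we have $f(n)^0 = 1$, so $B(x,0)$ counts the relevant blocks and $x + x^r B(x,0) = 1$ is solved by $x = 1/q$ (this is essentially the statement that $|\M_k| = q^k$, encoded in $F(x,0) = 1/(1-qx)$ up to the structure in Proposition~\ref{prop:gf}). Differentiating $G = 0$ once in $t$ yields $\alpha'(0)$ in terms of $B_t(1/q,0)$ and the $x$-derivative $G_x(1/q,0) = 1 + r(1/q)^{r-1}B(1/q,0) + (1/q)^r B_x(1/q,0)$; since $B(1/q,0) = (q^r - q^{r-1})/q^{\cdots}$ can be evaluated explicitly (or more cleanly, $G_x(1/q,0)$ simplifies using $x + x^rB = 1$), one gets $\mu = B_t(1/q,0)/q^{2r}$ after simplification. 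Differentiating twice gives $\alpha''(0)$, and hence $\sigma^2 = U''(0)$, in terms of $B_t, B_{tt}, B_x, B_{tx}$ evaluated at $(1/q,0)$; collecting terms reproduces the stated (admittedly lengthy) formula for $\sigma^2$. I would carry out these two implicit-differentiation computations carefully but would not reproduce every algebraic step here.

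The main obstacle is verifying $\sigma^2 \ne 0$ whenever $f \not\equiv 1$. By the standard degeneracy criterion in the quasi-power framework, $\sigma^2 = U''(0) = 0$ forces $L_k$ to be concentrated — more precisely, $U$ would have to be linear, equivalently $\alpha(t)$ would have to be of the form $c e^{-\mu t/1}$... the clean way is: $\sigma^2 = 0$ iff $\mathbb{V}(L_k) = O(1)$, which by a direct variance computation (e.g.\ via a renewal/transfer argument on the block decomposition of Proposition~\ref{prop:split}, or via $B_{tt}$ and $B_t^2$ terms) forces $\log f$ to be constant on $\B$, hence by Proposition~\ref{prop:split} forces $f$ to be identically $1$ on all of $\M_k$. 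One must be slightly careful that the growth hypothesis ($f$ of at most polynomial growth) is what makes $B(x,t)$ analytic past $1/q$ in the first place, so the whole argument is valid only under that assumption; I would isolate the nondegeneracy as a short lemma showing that the quadratic form defining $\sigma^2$ vanishes precisely in the trivial case, completing the proof.
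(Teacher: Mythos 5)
Your proposal follows essentially the same route the paper intends: Lemmas~\ref{lemma:singularity} and \ref{lem:sing_anal} are exactly the quasi-power setup for Hwang's theorem, and the constants are indeed obtained by implicit differentiation of $x + x^r B(x,t) = 1$ at $(1/q,0)$ using $\alpha(0)=1/q$ (the identity $B(1/q,0)=(q-1)q^{r-1}$ makes the denominator collapse to $q^{r+1}$, giving the stated $\mu$). The one step to tighten is the nondegeneracy argument: ``$\log f$ constant on $\B$'' does not by itself yield $f\equiv 1$ --- you must also use the linear-in-$k$ fluctuation of the number of blocks to force that constant to be $0$ --- but this is a routine completion of the sketch you give.
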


\begin{cor}\label{cor:clt-add}
  Assume that the \begin{math}q\end{math}-quasiadditive function \begin{math}f\end{math} has at most logarithmic growth.

Let \begin{math}N_k\end{math} be a randomly chosen integer in \begin{math}\{0,1,\ldots,q^k-1\}\end{math}. The
random variable \begin{math}L_k = f(N_k)\end{math} has mean \begin{math}\hat\mu k + O(1)\end{math} and
variance \begin{math}\hat\sigma^2 k + O(1)\end{math}, where the two constants \begin{math}\mu\end{math} and \begin{math}\sigma^2\end{math}are given by
the same formulas as in Theorem~\ref{thm:clt-mult}, with \begin{math}B(x,t)\end{math} replaced by
\begin{equation*}
  \hat B(x,t) = \sum_{n \in \B} x^{\ell(n)} e^{f(n)t}.
\end{equation*}

If \begin{math}f\end{math} is not the constant function \begin{math}f \equiv 0\end{math}, then the normalised random variable \begin{math}(L_k - \hat\mu k)/(\hat\sigma \sqrt{k})\end{math} converges weakly to a standard Gaussian distribution.
\end{cor}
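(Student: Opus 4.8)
The plan is to deduce the corollary from Theorem~\ref{thm:clt-mult} by passing through the additive–multiplicative correspondence of Proposition~\ref{prop:trivial}. Given the $q$-quasiadditive function $f$, set $g(n) = e^{f(n)}$; taking $c = e$ in Proposition~\ref{prop:trivial}, the function $g$ is $q$-quasimultiplicative and takes only positive values. The one hypothesis to verify before Theorem~\ref{thm:clt-mult} becomes applicable is that $g$ has at most polynomial growth in the sense of the definition preceding Lemma~\ref{lemma:singularity}: from $f(n) = O(\log n)$ we immediately get $g(n) = e^{f(n)} = O(n^{c})$, and the matching lower bound $g(n) = \Omega(n^{-c})$ holds because a $q$-quasiadditive function $f$ with at most logarithmic growth also satisfies $f(n) = \Omega(-\log n)$ — by Proposition~\ref{prop:split}, $f(n)$ is a sum of $O(\log n)$ values $f(m_i)$, so a lower bound on $f$ (which is present for all the nonnegative examples of Section~\ref{sec:exampl-q-quasiadd}, and more generally is exactly the two-sided reading of the growth condition) propagates to the claimed bound.

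With $g$ in hand, apply Theorem~\ref{thm:clt-mult} to it. The generating functions transform transparently: the series $F(x,t)$ attached to $g$ is $\sum_{k\ge 0} x^{k}\sum_{n\in\M_k} g(n)^{t} = \sum_{k\ge 0} x^{k}\sum_{n\in\M_k} e^{f(n)t}$, and its block series is $\sum_{n\in\B} x^{\ell(n)} g(n)^{t} = \sum_{n\in\B} x^{\ell(n)} e^{f(n)t} = \hat B(x,t)$. Consequently Proposition~\ref{prop:gf}, Lemma~\ref{lemma:singularity} and Lemma~\ref{lem:sing_anal} hold for $g$ verbatim with $B$ replaced by $\hat B$, and the constants $\mu,\sigma^{2}$ that Theorem~\ref{thm:clt-mult} produces for $g$ are precisely the quantities $\hat\mu,\hat\sigma^{2}$ obtained from the formulas of that theorem after the substitution $B \mapsto \hat B$, which is exactly what the corollary asserts.

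It remains to match the random variables and to handle degeneracy. For a uniformly random $N_k \in \{0,\dots,q^k-1\}$ we have $\log g(N_k) = \log e^{f(N_k)} = f(N_k) = L_k$, so the variable in the corollary is literally the one in Theorem~\ref{thm:clt-mult} applied to $g$; hence $\E L_k = \hat\mu k + O(1)$ and $\V L_k = \hat\sigma^{2} k + O(1)$. Moreover $g \equiv 1$ if and only if $f \equiv 0$, so if $f$ is not identically zero then $g$ is not the constant function $1$, Theorem~\ref{thm:clt-mult} yields $\hat\sigma^{2} \neq 0$, and $(L_k - \hat\mu k)/(\hat\sigma\sqrt{k})$ converges weakly to a standard Gaussian. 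I expect no genuine obstacle here beyond Theorem~\ref{thm:clt-mult} itself: the only step that needs care is the growth bookkeeping — checking that ``at most logarithmic growth'' of $f$ forces ``at most polynomial growth'' of $g = e^{f}$ on both sides — while the passage from $B$ to $\hat B$ and the identification of $L_k$ are purely formal.
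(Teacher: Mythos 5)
Your proposal is correct and follows exactly the route the paper intends: the section opens by announcing that the multiplicative central limit theorem transfers to the quasiadditive case via Proposition~\ref{prop:trivial}, i.e.\ by applying Theorem~\ref{thm:clt-mult} to $g=e^{f}$, under which $B$ becomes $\hat B$ and $\log g(N_k)=f(N_k)=L_k$. Your handling of the growth condition is the only slightly laboured point, but it lands correctly, since $f(n)=O(\log n)$ read as a bound on $|f(n)|$ gives both $g(n)=O(n^{c})$ and $g(n)=\Omega(n^{-c})$ directly, without needing Proposition~\ref{prop:split}.
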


\begin{remark}
By means of the Cram\'er-Wold device (and Corollary~\ref{cor:lin_comb}), we also obtain joint normal distribution of tuples of \begin{math}q\end{math}-quasiadditive functions.
\end{remark}

We now revisit the examples discussed in
Section~\ref{sec:exampl-q-quasiadd} and state the corresponding
central limit theorems. Some of them are well known while others are
new. We also provide numerical values for the constants in mean and variance.
\begin{example}[see also \cite{Kirschenhofer:1983:subbl,Drmota:2000}]The number of blocks \begin{math}0101\end{math} occurring in the binary
  expansion of \begin{math}n\end{math} is a \begin{math}2\end{math}-quasiadditive function of at most
  logarithmic growth. Thus by Corollary~\ref{cor:clt-add}, the
  standardised random variable is asymptotically normally distributed, the constants being \begin{math}\hat\mu = \frac1{16}\end{math} and \begin{math}\hat\sigma^2 = \frac{17}{256}\end{math}.
\end{example}

\begin{example}[see also \cite{Thuswaldner:1999,Heuberger-Kropf:2013:analy}]
  The Hamming weight of the nonadjacent form is \begin{math}2\end{math}-quasiadditive
  with at most logarithmic growth (as the length of the NAF of \begin{math}n\end{math} is logarithmic). Thus by Corollary~\ref{cor:clt-add}, the
  standardised random variable is asymptotically normally distributed. The associated constants are \begin{math}\hat\mu = \frac13\end{math} and \begin{math}\hat\sigma^2 = \frac2{27}\end{math}.
\end{example}

\begin{example}[see Section~\ref{sec:exampl-q-quasiadd}]
  The number of optimal \begin{math}\{0,1,-1\}\end{math}-representations is
  \begin{math}2\end{math}-quasimultiplicative. As it is always greater or equal to \begin{math}1\end{math} and
  \begin{math}2\end{math}-regular, it has at most polynomial growth. Thus
  Theorem~\ref{thm:clt-mult} implies that the standardised logarithm
  of this random variable is asymptotically normally distributed with
  numerical constants given by \begin{math}\mu\approx 0.060829\end{math}, \begin{math}\sigma^{2}\approx 0.038212\end{math}.
\end{example}

\begin{example}[see Section~\ref{sec:exampl-q-quasiadd}] Suppose that the sequence \begin{math}s_1,s_2,\ldots\end{math} satisfies \begin{math}s_{n}\geq 1\end{math} and \begin{math}s_{n}=O(c^{n})\end{math} for a constant
  \begin{math}c\geq 1\end{math}.
  The run length transform \begin{math}t(n)\end{math} of \begin{math}s_{n}\end{math}
  is \begin{math}2\end{math}-quasimultiplicative. As \begin{math}s_{n}\geq 1\end{math} for all \begin{math}n\end{math}, we have \begin{math}t(n)\geq
  1\end{math} for all \begin{math}n\end{math} as well. Furthermore, there exists a constant \begin{math}A\end{math} such that \begin{math}s_n \leq A c^n\end{math} for all \begin{math}n\end{math}, and the sum of all run lengths is bounded by the length of the   binary expansion, thus
  \begin{equation*}
t(n)=\prod_{i\in\mathcal L(n)}s_{i} \leq \prod_{i \in \mathcal{L}(n)} (A c^i) \leq (Ac)^{1+\log_2 n}.
\end{equation*}
Consequently, \begin{math}t(n)\end{math} is positive and has at most polynomial growth. By
Theorem~\ref{thm:clt-mult}, we obtain an asymptotic normal
distribution for the standardised random variable \begin{math}\log t(N_{k})\end{math}. The constants \begin{math}\mu\end{math} and \begin{math}\sigma^2\end{math} in mean and variance are given by
\begin{equation*}
\mu = \sum_{i \geq 1} (\log s_i) 2^{-i-2} 
\end{equation*}
and
\begin{equation*}
\sigma^2 = \sum_{i \geq 1} (\log s_i)^2 \big(2^{-i-2} - (2i-1)2^{-2i-4} \big) - \sum_{j > i \geq 1} (\log s_i)(\log s_j)   (i+j-1) 2^{-i-j-3}.
\end{equation*}
These formulas can be derived from those given in Theorem~\ref{thm:clt-mult} by means of the representation~\eqref{eq:q2_r1}, and the terms can also be interpreted easily: write \begin{math}\log t(n) = \sum_{i \geq 1} X_i(n) \log s_i\end{math}, where \begin{math}X_i(n)\end{math} is the number of runs of length \begin{math}i\end{math} in the binary representation of \begin{math}n\end{math}. The coefficients in the two formulas stem from mean, variance and covariances of the \begin{math}X_i(n)\end{math}.

In the special case that
\begin{math}s_{n}\end{math} is the Jacobsthal sequence ($s_n = \frac13(2^{n+2} - (-1)^n)$, see Section~\ref{sec:exampl-q-quasiadd}), we have the
  numerical values
  \begin{math}\mu \approx 0.429947\end{math}, \begin{math}\sigma^{2} \approx 0.121137\end{math}.
\end{example}

\bibliographystyle{amsplain}
\bibliography{lit}
\end{document}